\DeclareSymbolFont{legacymaths}{OT1}{cmr}{m}{n}
\newtheorem{thm}{Theorem}[section]
\newtheorem{lem}[thm]{Lemma}
\newtheorem{cor}[thm]{Corollary}
\newtheorem{pro}[thm]{Proposition}
\theoremstyle{definition}
\newtheorem{defn}[thm]{Definition}
\newtheorem{defn-rem}[thm]{Definition and Remark}
\newtheorem{exmp}[thm]{Example}
\newtheorem{rem}[thm]{Remark}
\newtheorem{nota}[thm]{Notation}
\numberwithin{equation}{section}
\def\ni{\noindent}
\def\X{{\mathbb X}}
\def\L{{\mathbb L}}
\def\H{{\text {\bf H}}}
\def\Y{{\mathbb Y}}
\def\P{{\mathbb P}}
\def\L{{\mathbb L}}
\def\ds{\displaystyle}
\def\be{\bfseries\em }
\def\k{\Bbbk}
\def\sat{{\rm sat}}
\begin{document}

\title{Green's theorem and Gorenstein sequences}

\author[J. Ahn] {Jeaman Ahn}
\address{Department of Mathematics Education, Kongju National University, 182, Shinkwan-dong, Kongju, Chungnam 314-701, Republic of Korea}
\email{jeamanahn@kongju.ac.kr}
%\thanks{${}^1$This research was supported by Basic Science Research Program through the National Research Foundation of Korea (NRF) funded by the Ministry of Education, Science, and Technology (No. 2011-0027163).}

\author{Juan C. Migliore} 
\address{Department of Mathematics, 
 University of Notre Dame, 
  Notre Dame,
IN 46556 \\
 USA}
 \email{migliore.1@nd.edu}

\author[Y.S. SHIN]{Yong-Su Shin}
\address{Department of Mathematics, Sungshin Women's University, Seoul, Republic of Korea, 136-742}
\email{ysshin@sungshin.ac.kr }
%\thanks{${}^2$This research was supported by a grant from Sungshin Women’s University.}
%\thanks{${}^*$Corresponding author}

\subjclass[2010]{Primary:13D40; Secondary:13H10, 14C20}

\begin{abstract} 
We study consequences, for a standard graded algebra, of extremal behavior in Green's Hyperplane Restriction Theorem. First, we extend his Theorem 4 from the case of a plane curve to the case of a hypersurface in a linear space. Second, assuming a certain Lefschetz condition, we give a connection to extremal behavior in Macaulay's theorem. We apply these results to show that $(1,19,17,19,1)$ is not a Gorenstein sequence, and as a result we classify the sequences of the form $(1,a,a-2,a,1)$ that are Gorenstein sequences.

\end{abstract}
\maketitle
\tableofcontents

\section{Introduction} 

In the study of Hilbert functions of standard graded algebras, Macaulay's theorem \cite{M} and Green's theorem \cite{G1} stand out as being of fundamental importance both on a theoretical level and from the point of view of applications. 
Macaulay's theorem regulates the possible growth of the Hilbert function from one degree to the next. It is a stunning fact that strong geometric consequences arise whenever the maximum possible  growth allowed by this theorem is achieved \cite{Go}, \cite{BGM}, \cite{AM}, or even when the maximum is {\em almost} achieved \cite{CM}. Green's theorem regulates the possible Hilbert functions of the restriction modulo a general linear form. It is a less-studied question to ask what  happens if the maximum possible Hilbert function occurs for this restriction, although already Green gave some intriguing results \cite{G1}, \cite{BZ} in his so-called ``Theorem~3" and ``Theorem 4," and some results in this direction can also be found in \cite{AGS}. To our knowledge, the connections between these two kinds of extremal behavior have not previously been studied.

One area where both Macaulay's theorem and Green's theorem have been applied very profitably is the problem of classifying the  Hilbert functions of Artinian Gorenstein algebras (i.e. of finding all possible {\em Gorenstein sequences}). Of course this problem is probably intractable in full generality. However, many papers have been written on the subject, and we cannot begin to list them all here. Even the special case of socle degree 4 (i.e. Gorenstein sequences of the form $(1,a,b,a,1)$) has been carefully studied (see for instance \cite{St}, \cite{MNZ:3},  \cite{AS:1}, \cite{BZ}, \cite{MZ:2}, \cite{AS:3}), 
but a full classification remains open.

If $b \geq a$, these sequences are completely understood (see for instance  \cite{Ha:3}). It is the non-unimodal case that is of great interest. The study was begun by Stanley \cite{St}, who showed that $(1,13,12,13,1)$ is a Gorenstein sequence, so it follows easily that non-unimodal examples exist for all $a \geq 13$.  In \cite{MZ:2} the authors showed that Stanley's example is the smallest possible, i.e. that if $a \leq 12$ then $b \geq a$. This leads to an easy classification of the possibilities when $a-b = 1$. 
There remains the question of ``how non-unimodal can the Hilbert function be?" Stanley conjectured an asymptotic lower bound for $b$ as $a \rightarrow \infty$ in \cite{St2}, which was proved (including sharpness) in \cite{MNZ:3}. However, it is not known for any fixed value of $a \geq 18$ exactly which are the possible values of $b$, although F. Zanello has pointed out to us that for some specific values of $a$ (e.g. $a = 24$) it is fairly easy to find all possible $b$.

In this paper we make progress on both problems. First, we study some consequences of extremality for Green's theorem, including an analysis of a situation where we have an equivalence between this extremality and that for Macaulay's theorem. Next we apply this work to produce new results on Gorenstein sequences of socle degree 4.

More precisely, after recalling known facts in section \ref{background section}, our main goal in section \ref{mac-green} is to find new consequences of extremal behavior in Green's theorem. We recall Green's Theorem 4 and we first prove a direct generalization in Theorem \ref{T:20160707-213}, passing from Green's case of a plane curve to the case of a hypersurface in a linear subspace. Our main result in this section is Theorem \ref{relate}, which gives a connection, under certain assumptions, between extremal behavior for Green's theorem and extremal behavior for Macaulay's theorem. Because of this connection, Gotzmann's theorem applies as it did in the paper \cite{BGM} to give strong geometric consequences, which we explore in Corollary~\ref{cor of thm}. We also show that Green's theorem is ``sequentially sharp" in Corollary \ref{C:20160706-215}.

We apply our new results on Green's theorem in Section \ref{GorSection} to show that the sequence $(1,19,17,19,1)$ is not Gorenstein (Theorem \ref{T:20160707-301}). Our proof brings together a number of different techniques. The result is the main ingredient for our Corollary \ref{r-2}, which completes the classification of the socle degree 4 Gorenstein sequences with $a-b = 2$ (with the notation introduced above) by proving that the sequence is Gorenstein if and only if $a \geq 20$.

Theorem \ref{relate} makes a certain numerical assumption as well as a certain Lefschetz assumption in order to conclude that the two different kinds of extremal behavior are equivalent. This gives a new illustration of the importance of the so-called Lefschetz properties, which have been studied very extensively in the last two decades, especially the Weak Lefschetz Property (WLP) and the Strong Lefschetz property (SLP). 
However, it is worth noting here that our Lefschetz assumption is much milder than  WLP. Instead, we only assume that multiplication on our algebra by a general linear form is injective in just one degree. Interestingly, there are two different degrees where such an assumption leads to the equivalence mentioned above.
This Lefschetz (injectivity) assumption can be phrased in more than one way, as shown in Lemma \ref{WLP cond}.
It also leads to a surprisingly simple but useful result, Lemma \ref{socle}, which forces the existence of a socle element in a specific degree. It is a small improvement of \cite[Proposition 2.1 (b)]{MMN}, although our proof is completely different. It provides a very simple way to rule out cases, via the existence of socle elements, in our study of Gorenstein sequences in the last section.

Finally, we make a remark on the characteristic. In their paper \cite{BZ}, M. Boij and F. Zanello (and M. Green in the appendix) make a careful study of its role. They note that Green's theorem and Macaulay's theorem are true independently of the characteristic. However, Green's Theorem 3 (see Corollary \ref{Green 3} below) requires $\hbox{char } \k \neq 2$, and Green's Theorem 4 (see Theorem \ref{T:20160606-211}) requires $\hbox{char } \k = 0$ (although they point out that the characteristic can simply be ``large enough" in a sense that they make precise). Since our Theorem \ref{T:20160707-213}  uses Green's Theorem 4 for the induction, we also assume characteristic zero there, and hence the same is true of Corollary \ref{Green 3}. And because we use this result in one place in the proof of Theorem \ref{T:20160707-301}, we also assume it there. However, the main results of Section \ref{mac-green} are independent of the characteristic.

\section{Background} \label{background section}

Let $R = \k[x_0,\dots,x_n]$ be the homogeneous polynomial ring and let $A = R/I$ be a standard graded Artinian $\k$-algebra, where $\k$ is an infinite field. The {\em Hilbert function} of $A$ is the function on the natural numbers defined by $ {\bf H}(A,d) = \dim_\k [A]_i$. Since $A$ is Artinian, we often represent this function by the {\em $h$-vector} $(1 = h_0, h_1, \dots, h_e)$ with $h_e > 0$, where $h_i = {\bf H}(A,i)$. The integer $e$ is called the {\em socle degree} of~$A$.

Let $L \notin I$ be a linear form in $R$. We have the  graded exact sequence 
\begin{equation} \label{ses}
0 \rightarrow R/(I:L)(-1) \rightarrow R/I \rightarrow R/(I,L) \rightarrow 0.
\end{equation}

\begin{nota} \label{notation1}
Throughout this paper we shall adopt the following:
\[
\begin{array}{rcl}
h_i & = & \dim_\k [A]_i \\
b_i & = & \dim_\k [R/(I:L)]_i   \\
\ell_i & = & \dim_\k [R/(I,L)]_i .
\end{array}
\]
\end{nota}

\smallskip

The following is well known, and the first part follows from the above sequence.

\begin{lem} \label{L:202} Let $A=R/I$ be a graded Artinian  algebra, 
and let $L\notin I$ be a linear form of $R$. Then we have
\[
\H:=(h_0,h_1,\dots,h_e)=(1,b_0+\ell_1,\dots,b_{e-2}+\ell_{e-1},b_{e-1} + \ell_e) .
\]
Furthermore, if $A$ is Gorenstein then so is $R/(I:L)$, and $b_{e-1} = h_e = 1$:
\[
\H:=(h_0,h_1,\dots, h_{e-1},h_e=1)=(1,b_0+\ell_1,\dots,b_{e-2}+\ell_{e-1},b_{e-1} =1) 
\]
\end{lem}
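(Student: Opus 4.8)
The plan is to read off everything from the short exact sequence \eqref{ses}. Taking graded pieces in degree $d$ gives an exact sequence of finite-dimensional $\k$-vector spaces
\[
0 \to [R/(I:L)]_{d-1} \to [R/I]_d \to [R/(I,L)]_d \to 0,
\]
so additivity of dimension yields $h_d = b_{d-1} + \ell_d$ for every $d$, with the convention $b_{-1} = 0$ (which handles $h_0 = \ell_0 = 1$) and noting that in the top degree $d = e$ we get $h_e = b_{e-1} + \ell_e$. This is exactly the displayed formula, so the first assertion is immediate.

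For the second assertion, assume $A = R/I$ is Artinian Gorenstein with socle degree $e$. The key point is that $R/(I:L)$ is again Gorenstein. I would argue this via the standard fact that $I:L$ is a link: since $A$ is Gorenstein Artinian, $I$ is itself the ideal of a Gorenstein Artinian quotient, and colon by a nonzerodivisor-up-to-the-Artinian-reduction behaves well. More concretely, using Macaulay duality one writes $I = \mathrm{Ann}(F)$ for a form $F$ of degree $e$ in the divided power (or dual polynomial) ring, and then $I : L = \mathrm{Ann}(L \circ F)$, where $L \circ F$ is the image of $F$ under the contraction (partial differentiation) by the dual of $L$. Since $L \notin I$ means $L \circ F \neq 0$, and $L \circ F$ is a single form, $R/(I:L) = R/\mathrm{Ann}(L\circ F)$ is again Artinian Gorenstein; its socle degree is $e-1$ because $L \circ F$ has degree $e-1$. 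In particular $[R/(I:L)]_{e-1}$ is one-dimensional (it is the ``top'' of this Gorenstein algebra), i.e.\ $b_{e-1} = 1$; and it is zero in higher degrees. Feeding $b_{e-1} = 1$ and $\ell_e = h_e - b_{e-1} = 1 - 1 = 0$ back into the formula from the first part gives the refined display, and $h_e = 1$ holds because $A$ Gorenstein Artinian has one-dimensional socle concentrated in the top degree.

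The only real content is the claim that $R/(I:L)$ is Gorenstein; the rest is bookkeeping with the exact sequence. I expect the Macaulay-duality argument to be the cleanest route, but an alternative is purely homological: apply $\mathrm{Hom}(-, \omega_A)$ or use the fact that for a Gorenstein Artinian algebra the canonical module is $A$ itself, so $R/(I:L) \cong \mathrm{Ann}_A(L)$ is the kernel of multiplication by $L$ on $A$, which for a Gorenstein algebra is identified (via the perfect pairing $[A]_d \times [A]_{e-d} \to [A]_e \cong \k$) with the annihilator of the image of $L$, and one checks this submodule is cyclic and Gorenstein of socle degree $e-1$. Either way the main obstacle — and it is a mild one, being ``well known'' as the excerpt says — is pinning down precisely why the colon ideal produces another Gorenstein algebra and that its socle sits in degree exactly $e-1$; once that is in hand the numerical statement drops out of Lemma \ref{L:202}'s first part.
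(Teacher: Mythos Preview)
Your argument is correct and matches the paper's (minimal) treatment: the paper simply says the first part follows from the exact sequence \eqref{ses} and labels the Gorenstein claim as ``well known,'' giving no further details. Your Macaulay-duality argument ($I = \mathrm{Ann}(F)$, $I:L = \mathrm{Ann}(L\circ F)$ with $\deg(L\circ F) = e-1$) is the standard way to supply those details and is fine.

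One small imprecision in your alternative sketch: you write ``$R/(I:L) \cong \mathrm{Ann}_A(L)$ is the kernel of multiplication by $L$ on $A$,'' but $\mathrm{Ann}_A(L) = (I:L)/I$, whereas $R/(I:L)$ is the \emph{image} $L\cdot A$ of multiplication by $L$. The homological route still works (for Gorenstein $A$ the image $L\cdot A$ is canonically dual to $A/LA$ via the socle pairing, hence is itself Gorenstein of socle degree $e-1$), but the identification you wrote is backwards. Since your primary inverse-systems argument is clean and complete, this does not affect the validity of the proposal.
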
 

In this paper $A$ will always be Gorenstein, and we will  often use the following notation.

\begin{nota} \label{decomp notation}
With notation as in Lemma~\ref{L:202}, we shall simply call the following diagram
\[
\begin{array}{ccccccccccccccccc}
h_0 & h_1 & h_2 & \cdots & h_{e-1} &h_e\\
       & b_0 & b_1 & \cdots & b_{e-2} & b_{e-1}\\ \hline 
\ell_0 & \ell_1 & \ell_2 & \cdots & \ell_{e-1} & \ell_e
\end{array}
\]
{\em the decomposition of the Hilbert function $\H$}. 
\end{nota}

%\begin{rem} \label{R:20160419-203}
%For an $O$-sequence $\H=(h_0,h_1,\dots,h_e)$, if $h_i\leq i$, then we see that 
%$$(h_i)_{(i)}=\binom{i}{i}+\cdots \binom{\delta}{\delta},$$
%for some $\delta\geq 1$. By Macaulay's Theorem, 
%$$
%h_{i+1} \leq (h_i)_{(i)}\big |^{1}_{1}=\binom{i+1}{i+1}+\cdots \binom{\delta+1}{\delta+1}=h_i.
%$$
%Hence, if $\H=(h_0,h_1,\dots,h_e)$ is an $O$-sequence, and for some $1\leq i < e$,
%$
%h_i\leq i,
%$
%then
%$$
%h_i\geq  h_{i+1}\geq \cdots\geq h_{j-1}\geq h_{j}
%\quad\text{ for every } \quad 1\leq i< j\leq e.
%$$ 
%This means that if 
%$$h_i\leq i \quad \text { and }\quad  h_i<h_j$$
%for some $i<j$, then $\H$ is not an $O$-sequence.
%\end{rem}

\begin{defn} \label{binomial exp} Let $r$ and $i$ be positive integers. The {\em $i$-binomial expansion of $r$} is 
\[
r_{(i)} = \binom{r_i}{ i}+\binom{r_{i-1}}{ i-1}+...+\binom{r_j}{ j},
\]
 where $r_i>r_{i-1}>...>r_j\geq j\geq 1$. Such an expansion always exists and is unique (see, e.g.,  \cite{BH}, Lemma 4.2.6). Following \cite{BH}, we define, for any integers $a$ and $b$,
$$
{r_{(i)}}|_{a}^{b}=\binom{r_i+b}{ i+a}+\binom{r_{i-1}+b}{ i-1+a}+...+\binom{r_j+b}{ j+a},
$$
where we set $\binom{m}{ c}=0$ whenever $m<c$ or $c<0$.
\end{defn}

\begin{thm}[\cite{G1}, \cite{M}]\label{T:201} 
Let $h_d$ be the entry of degree $d$ of the 
Hilbert function of $R/I$ and let $\ell_d$ be the degree $d$ entry  of the Hilbert 
function of $R/(I,L)$, where $L$ is a general linear form of $R$. Then, we have the following inequalities.

\begin{enumerate}
\item [(a)] {\rm Macaulay's Theorem:} $h_{d+1} \leq \left((h_d)_{(d)}\right)|^{+1}_{+1}$.
\item [(b)] {\rm Green's Hyperplane Restriction Theorem (Theorem 1):} $\ell_d \leq \left((h_{d})_{(d)}\right)|^{-1}_{\phantom{-}0}.$
\end{enumerate}
\end{thm}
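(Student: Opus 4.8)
Since Theorem~\ref{T:201} collects two classical theorems, my plan is to recall the standard route to each; both ultimately reduce to the combinatorics of lex-segment ideals. The common thread is: (i) the Hilbert function of $R/I$ is unchanged if $I$ is replaced by its initial ideal in any term order, so one may assume $I$ is a monomial ideal; (ii) for each of the two questions a monomial ideal can be replaced by the lex-segment ideal with the same Hilbert function without weakening the bound; and (iii) for a lex-segment ideal both inequalities become equalities that unwind directly from Definition~\ref{binomial exp} and the definition of the operators $(\cdot)|^{a}_{b}$.

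For part (a), I would pass to $\mathrm{in}(I)$ in, say, the degree-reverse-lex order and assume $I$ monomial. The point is that if $m$ is a degree-$(d+1)$ monomial not in $I$, then each of its variable-divisors of degree $d$ is also not in $I$; hence the degree-$(d+1)$ monomials outside $I$ form a set whose ``shadow'' of degree-$d$ divisors has size at most $h_d$. The combinatorial heart is then the Macaulay compression lemma (see also Sperner, and its descendant the Clements--Lindstr\"{o}m theorem): among all sets of degree-$(d+1)$ monomials whose shadow has size at most $h_d$, the lex segment is largest, and its size is exactly $\left((h_d)_{(d)}\right)|^{+1}_{+1}$. A compression argument moves an arbitrary monomial set toward the lex segment one variable at a time without enlarging the shadow, and this gives (a). (The same lemma, pushed further, yields Macaulay's full characterization of Hilbert functions, but only this much is needed here.)

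For part (b), Green's theorem, the reduction to a monomial statement is more delicate because $\ell_d$ refers to a \emph{general} $L$, so the passage to an initial ideal must be made compatible with the hyperplane section. The standard device is the generic initial ideal $J=\mathrm{gin}(I)$ in the degree-reverse-lexicographic order: in generic coordinates one may take $L=x_n$, and by the behavior of the revlex gin under a general hyperplane section (a result of Green; compare Bayer--Stillman) one gets that $\ell_d$ equals the number of degree-$d$ monomials that lie outside $J$ and are not divisible by $x_n$. It then remains to maximize this quantity over monomial ideals $J$ with $\dim_\k[R/J]_d=h_d$; a compression argument again shows the lex-segment ideal is extremal, and for it the count is precisely $\left((h_d)_{(d)}\right)|^{-1}_{0}$ --- which is exactly what the operator $|^{-1}_{0}$ records, term by term, in the $d$-binomial expansion of $h_d$.

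I expect the main obstacles to be the two compression lemmas, together with the fact (the heart of Green's argument) that the revlex generic initial ideal computes the general hyperplane restriction in every degree: this is the step that converts the genericity of $L$ into a purely monomial assertion. The concluding binomial identities are routine manipulations of the $i$-binomial expansion and the operators $|^{a}_{b}$, which I would carry out only once the reduction to lex segments is in place. In the paper itself these results are of course simply quoted from \cite{M} and \cite{G1}.
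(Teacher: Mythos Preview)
The paper does not supply a proof of Theorem~\ref{T:201}; it is stated as background with citations to \cite{M} and \cite{G1}, exactly as you anticipate in your final sentence. Your outline is a faithful sketch of the standard modern route to both results via lex-segment compression and, for part~(b), the revlex generic initial ideal. One small caveat worth noting: the gin-based reduction you describe for Green's bound relies on Borel-fixedness of $\mathrm{gin}(I)$, which in the form you use it needs characteristic zero (or large enough), whereas the paper explicitly remarks in the introduction that Green's Theorem~1 holds in all characteristics; Green's original argument in \cite{G1} is more direct and characteristic-free. But since the paper only cites the result, there is no proof to compare against.
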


%\begin{nota}
%We  denote by $\mathcal G(I)$ the set of minimal (monomial) generators of $I$ and
%by $\mathcal G(I)_d$ the elements of $\mathcal G(I)$ having degree
%$d$. For a monomial $T=x_0^{a_0}\cdots x_n^{a_n}\in R= \k[x_0,\dots,x_n]$, define
%$$
%m(T):=\max\{ j \mid a_j>0\}. 
%$$
%For any graded $R$-module $M$ suppose we have a minimal free resolution
%\[
%0 \rightarrow \mathbb F_n \rightarrow \dots \rightarrow \mathbb F_1 \rightarrow \mathbb F_0 \rightarrow M \rightarrow 0.
%\]
%We define
%\[
%\beta_{q,i} (M) = \dim_k \Tor_q^R (M,k)_{i}.
%\]
%\end{nota}

%\begin{thm}[{\cite[Eliahou-Kervaire]{EK}}]\label{T:012}
%Let $I$ be a stable monomial ideal of $R= \k[x_0,\dots,x_n]$. Then we have
%$$
%\beta_{q,i}(I)=\sum_{T\in \mathcal
%G(I)_{i-q}}\binom{m(T)-1}{q}.
%$$
%\end{thm}

%\begin{lem}[{\cite[Lemma 3.8]{AS}}] \label{L:013}
%Let $J$ be a stable ideal of $R= \k[x_0,\dots,x_n]$. Then we have
%\begin{align*}
%\dim_k \left((J:x_n)/J\right)_{d-1} =  \big|\{T \in \mathcal G(J)_{d}
%\mid x_n\text{ divides } T \}\big|.
%            %= &\, \beta_{n,n+d}(I).
%\end{align*}
%\end{lem}

\begin{thm}[\cite{Go}, Gotzmann's Persistence Theorem] \label{T:20110413-208} Let $I$ be a homogeneous ideal generated in degrees $\le d+1$. If in Macaulay's estimate,
$$
h_{d+1}=((h_{d})_{(d)})|^{+1}_{+1},
$$
then $I$ is $d$-regular and
$$
h_{t+1}=((h_{t})_{(t)})|^{+1}_{+1}
$$
for all $t\ge d$. 
\end{thm}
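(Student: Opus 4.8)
My plan is to separate the two conclusions after one preliminary observation. First I would note that maximal growth at degree $d$ already forces $I$ to have no minimal generator in degree $d+1$: applying Macaulay's theorem (Theorem~\ref{T:201}(a)) to the ideal generated by $[I]_d$ gives $\dim_\k(R_1\cdot[I]_d)\ge\binom{n+d+1}{d+1}-((h_d)_{(d)})|^{+1}_{+1}=\dim_\k[I]_{d+1}$, and since $R_1\cdot[I]_d\subseteq[I]_{d+1}$ this is an equality, so $R_1\cdot[I]_d=[I]_{d+1}$. Hence I may assume $I$ is generated in degrees $\le d$ and $[I]_t=R_{t-d}\cdot[I]_d$ for every $t\ge d$. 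With that reduction, the persistence statement is the real content; granting it, $d$-regularity would follow by comparing $I$ with the lex-segment ideal $J$ of the same Hilbert function: persistence forces $[J]_{t+1}=R_1\cdot[J]_t$ for all $t\ge d$, so $J$ has no minimal generator in degree $>d$, whence its minimal free (Eliahou--Kervaire) resolution gives $\reg J\le d$, and the characteristic-free inequality $\beta_{ij}(R/I)\le\beta_{ij}(R/J)$ of Bigatti--Hulett--Pardue then gives $\reg I\le d$.

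To prove persistence I would induct on the number of variables of $R$, the one- and two-variable cases being an elementary check on binomial expansions. In the inductive step I would pick a general linear form $L$, set $\bar R=R/(L)$, and let $\bar I$ be the image of $I$ in $\bar R$ --- a homogeneous ideal generated in degrees $\le d$, with $R/(I,L)=\bar R/\bar I$ of Hilbert function $(\ell_i)$ in the notation of Lemma~\ref{L:202}. The key step is a descent: the equality $h_{d+1}=((h_d)_{(d)})|^{+1}_{+1}$ forces $\bar R/\bar I$ to be extremal for Macaulay's bound in the same degree, i.e. $\ell_{d+1}=((\ell_d)_{(d)})|^{+1}_{+1}$. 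The mechanism is that equality in Macaulay's estimate from $h_d$ to $h_{d+1}$ forces equality in Green's estimate (Theorem~\ref{T:201}(b)) in degree $d+1$, namely $\ell_{d+1}=((h_{d+1})_{(d+1)})|^{-1}_0$ --- in the decomposition $h_{d+1}=b_d+\ell_{d+1}$ of Lemma~\ref{L:202} no slack is left once Green's bound in degree $d$ is used to control $b_{d-1}$, hence $h_d$ --- and a comparison of the $(d+1)$- and $d$-binomial expansions then turns the pair of equalities $\ell_{d+1}=((h_{d+1})_{(d+1)})|^{-1}_0$ and $\ell_d=((h_d)_{(d)})|^{-1}_0$ into $\ell_{d+1}=((\ell_d)_{(d)})|^{+1}_{+1}$.

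Feeding $\bar I$ to the inductive hypothesis would then give $\ell_{t+1}=((\ell_t)_{(t)})|^{+1}_{+1}$ for all $t\ge d$, and I would lift this along the exact sequence~\eqref{ses}: from $h_{t+1}=b_t+\ell_{t+1}$, together with a companion analysis of $R/(I:L)$ --- using that $I$ is generated in degrees $\le d$, which controls $I:L$, and that attaining Green's bound pins down multiplication by $L$ on $R/I$ in these degrees, so that the $b_t$ too grow maximally --- I would obtain $h_{t+1}=((h_t)_{(t)})|^{+1}_{+1}$ for every $t\ge d$, completing the induction.

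The hard part will be exactly this descent together with the companion analysis of $I:L$: making the binomial-expansion comparison and the behaviour of $I:L$ under a general linear form both rigorous and characteristic-free. If that route proved too delicate I would fall back on Gotzmann's original strategy, replacing it by a flatness/semicontinuity argument --- degenerate $I$ in a flat family to its generic initial (or lex-segment) ideal, for which regularity equals the top degree of a minimal generator and extremality is transparent, and invoke upper semicontinuity of regularity --- at the cost of then having to verify separately that the extremal Hilbert function is preserved under the degeneration, which is precisely where the combinatorial work reappears.
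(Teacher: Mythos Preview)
The paper does not prove this statement: Theorem~\ref{T:20110413-208} is quoted as a background result with the citation~\cite{Go} and no argument is given. So there is no ``paper's own proof'' to compare your proposal against.

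That said, your outline is essentially Green's proof of Gotzmann's theorem (as in~\cite{G1}) rather than Gotzmann's original argument: reduce to $I$ generated in degrees $\le d$, show that maximal growth from $d$ to $d+1$ forces sharpness in the hyperplane restriction, descend to $\bar R/\bar I$, and induct on the number of variables. The descent step you flag as delicate does go through cleanly once you observe the identity $((h_d)_{(d)})|^{+1}_{+1}\,|^{-1}_{0} = ((h_d)_{(d)}|^{-1}_{0})|^{+1}_{+1}$: from $h_{d+1}=b_d+\ell_{d+1}$ with $b_d\le h_d$ and $\ell_{d+1}\le (h_{d+1})|^{-1}_0$, the equality $h_{d+1}=h_d+(h_{d+1})|^{-1}_0$ forces both bounds to be sharp, and then Macaulay's bound on $\ell_{d+1}$ together with the identity pins down $\ell_d=(h_d)|^{-1}_0$ as well. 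The lifting step (showing $b_t=h_t$ for all $t\ge d$) is where your sketch is thinnest; one clean way is to note that once $[I:L]_d=[I]_d$ and $I$ is generated in degrees $\le d$, the ideal $I:L$ is also generated in degrees $\le d$ with the same degree-$d$ piece, so an induction on $t$ using the descent at each stage closes the loop. Your regularity argument via Bigatti--Hulett--Pardue is a valid shortcut, though Green's original route derives regularity directly from $[I:L]_d=[I]_d$ and Bayer--Stillman (Proposition~\ref{P:20160530-301}).
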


%\begin{rem}[{\cite[Remark 2.9]{AS:2}}] \label{R:20110413-213} 
%Recall that if $I^{\rm lex}$ is the lex-segment ideal associated with a homogeneous ideal $I$ in $R=k[x_0,\ldots, x_n]$, then 
%$$
%\begin{array}{lllllllllllllllllllllll}
%\big(\H(R/I,d)_{(d)}\big)|^{-1}_{\phantom{-}0}
%& = & \H(R/(I^{\rm lex},x_n),d).
%\end{array}
%$$
%\end{rem}

The following result, with a small change in notation, is \cite{Za4}, Theorem 3.5.

\begin{lem}\label{L:20160525-205}
Let $\H=(h_0,\dots,h_{d-1}, h_d, h_{d+1},\dots,h_e)$ be an $h$-vector of an Artinian ring $R/J$. Suppose that, for some $d>0$, there is a positive integer  $\varepsilon>0$ such that 
$$
h_{d-1}=(h_{d})_{(d)}|^{-1}_{-1}+\varepsilon \text{ \ \ and \ \ } h_{d+1}=(h_{d})_{(d)}|^{+1}_{+1}.
$$ 
Then the ring $R/J$ has socle of dimension $\varepsilon$ in degree $d-1$. Consequently, if $R/J$ has the graded minimal free resolution $\mathbb F$, as above, then
$$ 
\beta_{r, r+d-1}(R/J)=\varepsilon.
$$
\end{lem}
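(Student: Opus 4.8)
The plan is to compute $\dim_\k[\soc(R/J)]_{d-1}$ exactly; the displayed consequence about $\beta_{r,r+d-1}$ is then immediate, since for an Artinian quotient $R/J$ of $R=\k[x_0,\dots,x_n]$ one has $r=\mathrm{pd}_R(R/J)=n+1$ and the last free module in $\mathbb F$ computes $\Tor^R_{n+1}(\k,R/J)$, which by the Koszul complex on $x_0,\dots,x_n$ is $\soc(R/J)(-n-1)$, so $\beta_{r,r+t}(R/J)=\dim_\k[\soc(R/J)]_t$ for all $t$. The first observation is that $[\soc(R/J)]_{d-1}=(J:\mathfrak m)_{d-1}/[J]_{d-1}$ is the kernel of the multiplication map $[R/J]_{d-1}\to\Hom_\k([R]_1,[R/J]_d)$, so it depends only on $[J]_{d-1}$ and $[J]_d$. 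I would therefore replace $J$ by $\bar J:=\langle[J]_{\le d}\rangle$. Applying Macaulay's bound (Theorem~\ref{T:201}(a)) to $R/\langle[J]_d\rangle$ gives $\dim_\k[R]_{d+1}/R_1[J]_d\le(h_d)_{(d)}|^{+1}_{+1}=h_{d+1}$; since $R_1[J]_d\subseteq[J]_{d+1}$ and the latter has codimension $h_{d+1}$, equality is forced, so $[\bar J]_t=[J]_t$ for all $t\le d+1$. Hence $[\soc(R/J)]_{d-1}=[\soc(R/\bar J)]_{d-1}$, and $\bar J$ realizes maximal Macaulay growth from degree $d$ to $d+1$. (The case $h_d=0$ is trivial and may be set aside, so $R/\bar J$ is not Artinian.)

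Next I would invoke Gotzmann's Persistence Theorem~\ref{T:20110413-208}: $\bar J$ is generated in degrees $\le d$ and $\dim_\k[R/\bar J]_{d+1}=(h_d)_{(d)}|^{+1}_{+1}$, so $\bar J$ is $d$-regular, i.e.\ $\reg(R/\bar J)\le d-1$, and $R/\bar J$ keeps growing maximally in every degree $\ge d$. Regularity $\le d-1$ forces $H^0_{\mathfrak m}(R/\bar J)_t=0$ for $t\ge d$, so the saturation satisfies $[\bar J^{\,\sat}]_t=[\bar J]_t$ for $t\ge d$, and in particular $[\bar J^{\,\sat}]_d=[\bar J]_d$. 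Now for $a\in[\bar J^{\,\sat}]_{d-1}$ we get $x_ia\in[\bar J^{\,\sat}]_d=[\bar J]_d$ for every $i$, so $a\in(\bar J:\mathfrak m)$; conversely $(\bar J:\mathfrak m)\subseteq\bar J^{\,\sat}$ always. Thus $(\bar J:\mathfrak m)_{d-1}=[\bar J^{\,\sat}]_{d-1}$ and
\[
\dim_\k[\soc(R/J)]_{d-1}=\dim_\k\!\bigl([\bar J^{\,\sat}]_{d-1}/[\bar J]_{d-1}\bigr)=h_{d-1}-\dim_\k[R/\bar J^{\,\sat}]_{d-1}.
\]

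It remains to identify $\dim_\k[R/\bar J^{\,\sat}]_{d-1}$ with $(h_d)_{(d)}|^{-1}_{-1}$. Since $R/\bar J^{\,\sat}=(R/\bar J)/H^0_{\mathfrak m}(R/\bar J)$, the local cohomology sequence shows $H^j_{\mathfrak m}$ is unchanged for $j\ge1$ and vanishes for $j=0$; hence $\reg(R/\bar J^{\,\sat})\le\reg(R/\bar J)\le d-1$ and, as $H^0_{\mathfrak m}(R/\bar J^{\,\sat})=0$, the Hilbert function of $R/\bar J^{\,\sat}$ agrees with its Hilbert polynomial $P$ in all degrees $\ge d-1$. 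On the other hand, by Gotzmann persistence the Hilbert function of $R/\bar J^{\,\sat}$ in degrees $\ge d$ is the maximal-growth sequence out of $h_d$: writing $h_d=\binom{a_d}{d}+\binom{a_{d-1}}{d-1}+\dots+\binom{a_j}{j}$ in its $d$-binomial expansion, this sequence is $m\mapsto\sum_k\binom{a_k+m}{k+m}$ for $m\ge0$, a polynomial in $m$, so it must coincide with $P(d+m)$. Evaluating at $m=-1$ gives $P(d-1)=\sum_k\binom{a_k-1}{k-1}=(h_d)_{(d)}|^{-1}_{-1}$ by Definition~\ref{binomial exp}. Combining the two displays yields $\dim_\k[\soc(R/J)]_{d-1}=h_{d-1}-(h_d)_{(d)}|^{-1}_{-1}=\varepsilon$, which finishes the argument together with the Betti-number reduction above.

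I expect the main obstacle to be the bookkeeping in the last paragraph: checking that after saturating $\bar J$ the Hilbert function has already become polynomial by degree $d-1$ (for which one needs $\reg(R/\bar J^{\,\sat})\le d-1$ together with $H^0_{\mathfrak m}(R/\bar J^{\,\sat})=0$), and verifying that the value of that Hilbert polynomial at $d-1$ is exactly $(h_d)_{(d)}|^{-1}_{-1}$ — i.e.\ that running Macaulay's maximal growth one step backwards from degree $d$ reproduces the Hilbert polynomial of the Gotzmann limit. A secondary point to pin down carefully is the equality $(\bar J:\mathfrak m)_{d-1}=[\bar J^{\,\sat}]_{d-1}$, which is precisely where the $d$-regularity of $\bar J$ is used.
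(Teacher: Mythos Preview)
The paper does not prove this lemma; it is quoted verbatim as \cite{Za4}, Theorem~3.5, so there is no in-paper argument to compare against. Your proposal therefore has to be judged on its own merits, and it is correct.

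Your route is the natural one: pass to $\bar J=\langle[J]_{\le d}\rangle$, observe that the hypothesis $h_{d+1}=(h_d)_{(d)}|^{+1}_{+1}$ forces $R_1[J]_d=[J]_{d+1}$ and hence $[\bar J]_{d+1}=[J]_{d+1}$, then invoke Gotzmann to get $\reg(\bar J)\le d$. The key identification $(\bar J:\mathfrak m)_{d-1}=[\bar J^{\,\sat}]_{d-1}$ is exactly the step where $d$-regularity enters, as you note. The computation of $\dim_\k[R/\bar J^{\,\sat}]_{d-1}$ via the Hilbert polynomial is also clean: writing $\binom{a_k+m}{k+m}=\binom{a_k+m}{a_k-k}$, each summand is visibly polynomial in $m$, and specializing to $m=-1$ gives $\sum_k\binom{a_k-1}{k-1}=(h_d)_{(d)}|^{-1}_{-1}$. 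The local-cohomology bookkeeping you flag as a potential obstacle is fine: $\reg(R/\bar J^{\,\sat})\le d-1$ together with $H^0_{\mathfrak m}=0$ indeed forces $H^i_{\mathfrak m}(R/\bar J^{\,\sat})_{d-1}=0$ for all $i$, so Hilbert function and Hilbert polynomial agree already in degree $d-1$.

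One small remark on presentation: the symbol $r$ in $\beta_{r,r+d-1}$ is inherited from Zanello's notation and is not defined in the present paper; your identification $r=n+1=\mathrm{pd}_R(R/J)$ is the intended reading, and the reduction to $\dim_\k[\soc(R/J)]_{d-1}$ via $\Tor^R_{n+1}(\k,R/J)\cong\soc(R/J)(-n-1)$ is standard and correctly stated.
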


%\begin{proof}
%Consider the lex-segment ideal $J^{\rm lex}$ of $I$. First, note that $J^{\rm lex}$ is $d+1$-regular, because $R/J^{\rm lex}$ has a maximal growth in degree $d$. This implies that 
%$$
% \beta_{i, i+d}(R/J)=0 \quad \text{ for all } i\geq 0.
%$$
%On the other hand, we have 
%\begin{equation}\label{EQ:20160525-101}
%\begin{array}{llllllllllll}
% \beta_{r, r+d-1}(R/J)& =& \beta_{r-1, r+d-1}(J^{\rm lex}) \\[.3ex] 
% & = & {\ds \sum}_{T\in \mathcal
%G(J^{\rm lex})_{d}}\binom{m(T)-1}{r-1} \hskip .5true cm (\text{by Theorem~\ref{T:012}}) \\[1.3ex] 
%& = & \dim( (J^{\rm lex}:x_{r}) /J^{\rm lex} )_{d-1} \qquad (\text{by Lemma~\ref{L:013}})\\
%& = & \H(R/J^{\rm lex},d-1)-\H(R/J^{\rm lex},d) + (\H(R/J^{\rm lex},d)_{(d)})^{-1}_{\pp 0} & (\text{by Remark~\ref{R:20110413-213}})\\
%& = & (h_{d})_{(d)}|^{-1}_{-1}+\varepsilon -h_d+(h_{d})_{(d)}|^{+1}_{+1}\\
%& = & \varepsilon.
%\end{array}
%\end{equation}
%
%Consequently the Betti table of $R/J^{\rm lex}$ in degrees $d-1$ and $d$ is of the form
%$$
%\begin{array}{c|ccccccccccccccccccccccccccccc}
%                                     & 0 & 1 & \cdots & \cdots & r-1 & r & \\ 
%                             \hline                                
%                                d-1 & \cdot & * &  \cdots & \cdots & 0&  \varepsilon &\\ 
%                                d & \cdot & 0 &  \cdots & \cdots &0 & 0 & &
%\end{array}
%$$
%Therefore, it follows from the Cancellation Principle that $R/J$ has $\varepsilon$-dimensional socle elements in degree $d-1$, which completes the proof.
%\end{proof}

\begin{rem}\label{R:20160525-206}
As noted in \cite{Za4} Example 3.6, Lemma~\ref{L:20160525-205} slightly generalizes Theorem~3.4 in \cite{GHMS} (see also \cite{CI}). For example, consider an Artinian ring $R/I$ with an $h$-vector
$$
\H=(1,18,16,18,28).
$$  
Note that there is no positive integer $h_2$ such that
$$
(h_2)_{(2)}|^{+1}_{+1}=h_3,
$$
and thus we cannot apply Theorem~3.4 in \cite{GHMS} to show that $R/J$ has a socle element in degree $3$.

However, by Theorem \ref{T:201} (a), $R/I$ has maximal growth in degree $3$. Moreover, since
$$
(18)_{(3)}|^{-1}_{-1}=10,
$$
we get that $R/I$ has a $6$-dimensional socle  in degree $3$. 
\end{rem}

%\begin{thm}[{\cite[Theorem 2.4]{MZ:2}}]\label{T:20160427-207} 
%Let $\H=(1,h_1,\dots,h_e)$ be the $h$-vector of a level algebra $A$ of type $t$, and let $\H'=(1,h_1',\dots,h_e')$ be the $h$-vector of any level quotient of $A$ of type $t-1$ and the same socle degree. Then the reverse of the difference of $\H$ and $\H'$, namely, $(1,h_{e-1}-h'_{e-1},h_{e-2}-h'_{e-2},\dots)$, is an $O$-sequence.
%\end{thm}

\begin{pro}[{\cite{BS}}]\label{P:20160530-301}
$I$ is $m$-saturated if and only if, for a general linear form $L\in R$, $(I:L)_d=I_d$ for every $d\ge m$.
\end{pro}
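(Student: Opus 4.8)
The plan is to unwind the definition of $m$-saturation and reduce both implications to elementary properties of a primary decomposition of $I$. Recall that $I$ is \emph{$m$-saturated} means $I_d = I^{\sat}_d$ for all $d \ge m$, where $I^{\sat} = \bigcup_{k\ge 1}(I:\mathfrak{m}^k)$ is the saturation with respect to $\mathfrak{m} = (x_0,\dots,x_n)$. A one-line induction on $k$ shows this is equivalent to the single condition $(I:\mathfrak{m})_d = I_d$ for all $d \ge m$: if $f\in(I:\mathfrak{m}^{k+1})_d$ with $d\ge m$, then each $x_i f$ lies in $(I:\mathfrak{m}^k)_{d+1}$ and $d+1\ge m$, so by induction $x_i f\in I$, whence $f\in(I:\mathfrak{m})_d=I_d$.

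For the ``only if'' direction, fix an irredundant primary decomposition $I = Q_1\cap\cdots\cap Q_s$ and relabel so that $Q_1,\dots,Q_r$ are precisely the components whose radicals $P_i=\sqrt{Q_i}$ differ from $\mathfrak{m}$; then $I^{\sat}=Q_1\cap\cdots\cap Q_r$. Since each $P_i\ne\mathfrak{m}$, its degree-one piece $[P_i]_1$ is a proper subspace of $R_1$, and because $\k$ is infinite a general linear form $L$ avoids $\bigcup_{i=1}^r[P_i]_1$, so $L\notin P_i$ for $i=1,\dots,r$. Now if $f\in(I:L)$ then $Lf\in I\subseteq Q_i$ for each $i\le r$; as $Q_i$ is $P_i$-primary and $L\notin P_i$, this forces $f\in Q_i$. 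Hence $(I:L)\subseteq Q_1\cap\cdots\cap Q_r=I^{\sat}$, and since $I\subseteq(I:L)$ always holds, we get $I_d\subseteq(I:L)_d\subseteq I^{\sat}_d=I_d$ for every $d\ge m$, as desired.

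For the ``if'' direction, assume $(I:L)_d=I_d$ for all $d\ge m$ and a general $L$. It is enough to check $(I:\mathfrak{m})_d\subseteq I_d$ for $d\ge m$: if $f\in(I:\mathfrak{m})_d$ then $x_i f\in I$ for all $i$, so $Lf\in I$ because $L$ is a $\k$-linear combination of the $x_i$, and therefore $f\in(I:L)_d=I_d$. By the equivalence established in the first paragraph, this says exactly that $I$ is $m$-saturated.

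The only genuinely delicate point is the genericity step in the ``only if'' direction: one needs that a general linear form lies outside every associated prime of $I$ other than $\mathfrak{m}$ itself. This rests on $\k$ being infinite --- so that $R_1$ is not a finite union of proper subspaces --- together with the observation that an associated prime distinct from $\mathfrak{m}$ cannot contain all of $R_1$. Everything else is routine manipulation of ideal quotients and primary decomposition.
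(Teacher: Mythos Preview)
Your proof is correct. Note, however, that the paper does not give its own proof of this proposition: it simply states the result and cites Bayer--Stillman \cite{BS}. So there is no ``paper's proof'' to compare against; you have supplied a complete, self-contained argument where the authors merely quote the literature.

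A few minor remarks on your write-up. In the ``if'' direction you only use the existence of \emph{one} linear form $L$ with $(I:L)_d=I_d$ for $d\ge m$, since $(I:\mathfrak{m})\subseteq(I:L)$ for any $L\in\mathfrak{m}$; the genericity is irrelevant there. The genericity is genuinely needed in the ``only if'' direction, and you correctly isolate it as the step requiring $\k$ infinite. Your use of primary decomposition to identify $I^{\sat}$ with the intersection of the non-$\mathfrak{m}$-primary components is standard; it might be worth a half-line noting that $(Q_1\cap\cdots\cap Q_s):\mathfrak{m}^k=\bigcap_i(Q_i:\mathfrak{m}^k)$ and that for $k$ large the $\mathfrak{m}$-primary components disappear, but this is routine.
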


The following result is well known and follows from standard methods.

\begin{thm} \label{wellknown}
If $(1,n,a,n,1)$ is a Gorenstein $h$-vector then so are $(1,n,b,n,1)$ for each $a \leq b \leq \binom{r+1}{2}$ and $(1,n+1,a+1,n+1,1)$. 
\end{thm}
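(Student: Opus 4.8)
The plan is to prove the two implications separately, each by an explicit construction on the level of Artinian algebras, using only elementary "gluing" techniques. Write $A = R/I$ for a Gorenstein Artinian algebra with $h$-vector $(1,n,a,n,1)$, where $R = \k[x_0,\dots,x_n]$ (so $r = n$ in the statement; note the range $a \le b \le \binom{r+1}{2}$ is exactly $\binom{n+1}{2}$, the generic value of $[R]_2$ in $n+1$ variables modulo the relations forced by $h_1 = n$, i.e. one linear form has been used up).

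For the statement that $(1,n,b,n,1)$ is Gorenstein for each $a \le b \le \binom{r+1}{2}$: I would argue by increasing $b$ one step at a time, so it suffices to show that if $(1,n,b,n,1)$ is Gorenstein with $b < \binom{r+1}{2}$ then so is $(1,n,b+1,n,1)$. The cleanest route is via Macaulay's inverse systems (apolarity): a Gorenstein Artinian quotient of socle degree $4$ corresponds to a single form $F$ of degree $4$ in the dual (divided-power) polynomial ring, and the entries of the $h$-vector are the ranks of the catalogue of ``catalecticant'' maps $[\text{Apolar}]_{i} \times [\text{Apolar}]_{4-i} \to \k$; specifically $h_2 = b$ is the rank of the middle catalecticant $\mathrm{Cat}_{2,2}(F)$, while $h_1 = h_3 = n$ records the rank of $\mathrm{Cat}_{1,3}(F)$. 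Starting from an $F$ realizing $(1,n,b,n,1)$, I would perturb $F$ by a generic quartic $G$ supported (apolar-wise) so as not to disturb the rank of the outer catalecticant — keeping $h_1 = h_3 = n$ — while the middle catalecticant rank is upper semicontinuous and generically jumps to the maximum $\binom{r+1}{2}$; choosing the perturbation generically in a suitable subvariety lets us realize every intermediate value $b, b+1, \dots, \binom{r+1}{2}$. Alternatively, and perhaps more in the spirit of the paper, one can do this with a direct ideal-theoretic construction: take the inverse system module of $A$ and add one generic element to the degree-$2$ piece of its ``dual socle generator'' data. I expect the bookkeeping that $h_1$ and $h_3$ stay equal to $n$ (i.e., that no new linear forms enter $I$ and, by Gorenstein duality, $b_{e-1}=1$ is preserved) to be the one point needing care.

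For the statement that $(1,n+1,a+1,n+1,1)$ is Gorenstein: here I would use the standard ``adding a variable / trivial extension'' or a connected-sum type construction. If $A = R/I$ with $R = \k[x_0,\dots,x_n]$ realizes $(1,n,a,n,1)$, introduce a new variable $y$ and consider $B = A \otimes_\k \k[y]/(y^2)$, which has $h$-vector the convolution $(1,n,a,n,1) * (1,1)= (1, n+1, a+n, \dots)$ — too big in the middle — so instead I would form the fiber product / connected sum $A \#_\k \k[y]/(y^4)$ over the socle, which is Gorenstein of socle degree $4$ with $h$-vector $(1, n+1, a+1, n+1, 1)$: the connected sum adds exactly $1$ to each positive intermediate Betti-free entry coming from the new variable's ``diagonal'' $\k[y]/(y^4) = (1,1,1,1,1)$ glued along the $1$-dimensional top socle. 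Concretely, in terms of dual generators, if $A$ is apolar to $F(X_0,\dots,X_n)$ then $A\#\k[y]/(y^4)$ is apolar to $F + Y^4$ (in fresh variables), and one computes directly that the catalecticants have ranks $n+1$, $a+1$, $n+1$ in degrees $1,2,3$ — the ``$+1$'' in each case being the contribution of the pure power $Y^4$, which is apolar-independent from everything in $F$.

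The main obstacle in both parts is verifying that the intermediate $h$-vector entries are \emph{exactly} as claimed rather than merely bounded above/below: upper bounds come for free from Macaulay's and Green's theorems (Theorem \ref{T:201}), and lower bounds (nondegeneracy: $h_1 = n$ or $n+1$, $h_2 \ge a$ or $a+1$, and Gorenstein symmetry forcing $h_3 = h_1$) must come from the genericity of the chosen perturbation in the first part and from an explicit apolarity (linear-independence-of-differential-operators) computation in the second. I would isolate these as two short lemmas — one semicontinuity/genericity statement for catalecticant ranks, one direct computation for $F + Y^4$ — and then the theorem follows by assembling them, with the ``for each $a \le b \le \binom{r+1}{2}$'' clause handled by the one-step induction on $b$ described above.
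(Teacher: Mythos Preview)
The paper does not actually prove this theorem: it is stated with the remark ``The following result is well known and follows from standard methods,'' and no argument is given. So there is nothing to compare your proposal against line by line. That said, your choice of framework --- Macaulay's inverse systems and catalecticant ranks --- is exactly the ``standard method'' one has in mind here, and your treatment of the second assertion is correct: if $F$ is a degree-$4$ dual socle generator in $n$ dual variables with catalecticant ranks $(1,n,a,n,1)$, then $F+Y^{4}$ in $n+1$ dual variables has catalecticant ranks $(1,n+1,a+1,n+1,1)$, since the partial derivatives of $Y^{4}$ contribute one new independent element in each of degrees $1,2,3$ and nothing mixed. This is the trivial-extension/connected-sum construction, and your verification sketch is fine.

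Your argument for the first assertion, however, has a genuine gap. You correctly reduce to the one-step induction (show $(1,n,b,n,1)$ Gorenstein implies $(1,n,b+1,n,1)$ Gorenstein for $b<\binom{n+1}{2}$), but then you abandon it: you perturb $F$ by a \emph{generic} quartic $G$, observe that the middle catalecticant rank jumps to the maximum $\binom{n+1}{2}$, and assert that ``choosing the perturbation generically in a suitable subvariety'' realizes every intermediate value. Semicontinuity does not give this. Along a one-parameter family the rank of a matrix can jump by more than one (the zero matrix to the identity is already a counterexample), and you have not specified any subvariety on which the generic rank is exactly $b+1$. Your ``alternatively'' clause --- adding a generic element to the degree-$2$ piece of the dual data --- is too vague to rescue this: the inverse system of a Gorenstein quotient is principal, so one cannot simply enlarge its degree-$2$ component without changing the single generator $F$.

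The clean fix, which makes your one-step induction work, is to perturb by the fourth power of a linear form in the \emph{same} variables: replace $F$ by $F+L^{4}$ for $L$ general. Then $\mathrm{Cat}_{2,2}(L^{4})$ is the rank-one symmetric matrix $vv^{T}$ with $v$ the coefficient vector of $L^{2}$; since $b<\binom{n+1}{2}$ and the squares $L^{2}$ span all quadrics, a general $L$ has $v\notin\mathrm{col}\bigl(\mathrm{Cat}_{2,2}(F)\bigr)$, and an elementary symmetric-matrix argument shows $\mathrm{rank}\bigl(\mathrm{Cat}_{2,2}(F)+vv^{T}\bigr)=b+1$ in that case. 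The outer catalecticant rank stays equal to $n$ because we are still in $n$ variables and the first partials of $F$ already span. With this correction your proposal goes through.
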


%
%\begin{thm}[{\cite[Theorem 3]{G1}}]\label{T:20160530-302}
%Assume that ${\rm char}\k\ne 2$. If, for some positive integer $m$, $h_d=\binom{k+d}{d}$, and $\ell_d=\binom{(k-1)+d}{d}$, then the degree $d$ graded piece of the ideal $I$ is spanned by an $k$-dimensional linear space. In other words,
%$$
%I_d=R_{d-1}\cdot \langle L_1,L_2,\dots,L_{n-k}\rangle,
%$$
%where $L_1,L_2,\dots,L_{n-k}$ are linearly independent linear forms of $R=\k[x_0,x_1,\dots,x_n]$. 
%\end{thm}

%\begin{thm}[{\cite[Theorem 4]{G1}}]\label{T:20160606-211}
%Assume that ${\rm char}\k\ne 2$ and suppose that, for some integer $k$, $1 \le k \le d$, we have   
%$$
%h_d = \binom{d+1}{d}+\binom{d}{d-1}+\cdots+ \binom{d−(k−2)}{d-(k-1)}
%$$
%and ${(h_d)}|^{-1}_{\phantom{-}0}=k$.  Then, in degree $d$, $I$ is the ideal of a plane
%curve of degree $k$. In other words,
%$$
%I_d =\langle L_1,L_2,\dots,L_{n-2}\rangle R_{d−1} +F ·R_{d−k},
%$$
%where $L_1 , L_2 ,\dots, L_{n-2}$ are linearly independent linear forms of $R=\k[x_0,x_1,\dots,x_n]$ and $F$ is a form of $R$ of degree $k$.
%\end{thm} 

%%%%%%%%%%%%%%%%%%%%%%%%%%%%%%%%%%%%%%%%%%%%%%%%%%%%%%%

\section{Relations between Green's theorem and Macaulay's theorem} \label{mac-green}

Several papers have studied geometric and algebraic consequences for a standard graded algebra when its Hilbert function achieves the maximal growth in some degree allowed by Macaulay's theorem (Theorem \ref{T:201}  (a)).  See for instance \cite{Go}, \cite{davis}  \cite{BGM},  \cite{AM}, \cite{CM}. Not as much work has been done, to our knowledge, exploring the consequences of extremal behavior of the Hilbert function under Green's theorem (Theorem \ref{T:201} (b)) other than Green's Theorem 3 and Theorem 4 (see \cite{G1}, \cite{BZ}).
In this section we generalize Green's Theorem 4, and we give some results that connect the two kinds of extremality.

Throughout this section and the next we will use binomial expansions, and we refer to Definition \ref{binomial exp} for the conditions on the various integers. We first recall Green's Theorem 4. Recall also from Notation \ref{notation1} that $\ell_d = \dim_\k [R/(I,L)]_d$. Observe that the indicated restriction is extremal according to Green's theorem.

\begin{thm}[Green's Theorem 4] \label{T:20160606-211}
Let $I \subset R = \k[x_0,\dots,x_n]$ be a homogeneous ideal. Assume that $\hbox{char } \k = 0$ and suppose that, for some integers $m$ and $d$, $1 \leq m \leq d$, we have the binomial expansion
\[
h_d = md + 1 - \binom{m-1}{2} = \binom{d+1}{d} + \binom{d}{d-1} + \dots + \binom{d-(m-2)}{d-(m-1)}
\]
and $\ell_d = m =  (h_d)|^{-1}_{\phantom{-}0}$. Then in degree $d$, $I$ is the ideal of a plane curve of degree $m$. That is,
\[
[I]_d = \langle L_0,L_1,L_2,\dots,L_{n-3} \rangle + F \cdot [R]_{d-m}
\]
where $L_0,L_1,L_2,\dots,L_{n-3}$ are linearly independent linear forms and $F$ is a form of degree $m$.
\end{thm}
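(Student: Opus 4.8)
The plan is to fix $m$ and induct on $d\ge m$ (with base case $d=m$), transferring the extremal behaviour of $I$ in degree $d$ to that of $I:L'$, for a general linear form $L'$, in degree $d-1$, and then lifting the resulting plane-curve structure up by one degree. The numerical input is a routine computation with the operations of Definition~\ref{binomial exp}: the number $md+1-\binom{m-1}{2}$ is exactly the value in degree $d$ of the Hilbert function of the homogeneous coordinate ring of a degree-$m$ curve lying in a plane $\P^2\subseteq\P^n$; one has $\big((h_d)_{(d)}\big)|^{-1}_{0}=m$, so $\ell_d=m$ is precisely the value forced by equality in Green's theorem; and, via (\ref{ses}) and Notation~\ref{notation1}, $b_{d-1}=h_d-\ell_d=m(d-1)+1-\binom{m-1}{2}$, the analogous plane-curve value one degree down, with $\big((b_{d-1})_{(d-1)}\big)|^{-1}_{0}=m$ as well — a genuine instance of the hypothesis as soon as $d-1\ge m$.

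For the inductive step ($d\ge m+1$), let $L_1,L_2$ be general linear forms. Applying Green's theorem to the Artinian algebra $R/(I,L_1)$, whose degree-$d$ value is $\ell_d=m\le d$, and observing that the $d$-binomial expansion of $m$ is a sum of $m$ terms $\binom{d-j}{d-j}$ so that $\big((m)_{(d)}\big)|^{-1}_{0}=0$, we obtain $\dim_\k[R/(I,L_1,L_2)]_d=0$. Feeding this into (\ref{ses}) for the pair $(R/(I,L_1),\,L_2)$ gives $\dim_\k[R/((I,L_1):L_2)]_{d-1}=m$. Since $(I,L_1):L_2\supseteq (I:L_2,\,L_1)$,
\[
m=\dim_\k[R/((I,L_1):L_2)]_{d-1}\ \le\ \dim_\k[R/(I:L_2,\,L_1)]_{d-1}\ \le\ \big((b_{d-1})_{(d-1)}\big)|^{-1}_{0}=m,
\]
the last inequality being Green's theorem for $R/(I:L_2)$ with the general form $L_1$. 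Hence equality holds throughout, so $I:L_2$ satisfies the hypotheses of the theorem in degree $d-1$ (same $m$, and $m\le d-1$), and by the inductive hypothesis $[I:L_2]_{d-1}=\langle M_0,\dots,M_{n-3}\rangle + G\cdot[R]_{d-1-m}=[I_C]_{d-1}$ for linearly independent linear forms $M_i$, a form $G$ of degree $m$, and $C$ the associated degree-$m$ plane curve.

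To lift: since $I_C$ is $m$-regular and $d-1\ge m$, its degree-$d$ component is generated by its degree-$(d-1)$ component, so $[I_C]_d=[R]_1\cdot[I_C]_{d-1}$. One next checks that the curve $C=C_{L_2}$ is independent of the general form $L_2$ — for general $L_2$ one has $(I:L_2)_e=(I^{\sat})_e$ in all large degrees $e$, so the saturated ideal $I_C$ agrees with $I^{\sat}$ in large degrees and hence $I_C=I^{\sat}$, which is intrinsic to $I$. Granting this, $L_2\cdot[I_C]_{d-1}=L_2\cdot[I:L_2]_{d-1}\subseteq[I]_d$ for all general — hence, the containment being a closed condition, for all — linear forms $L_2$, so $[I_C]_d=[R]_1\cdot[I_C]_{d-1}\subseteq[I]_d$, and since $\dim_\k[I_C]_d=\binom{n+d}{d}-h_d=\dim_\k[I]_d$ we conclude $[I]_d=[I_C]_d$, the asserted form. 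For the base case $d=m$ (so $h_m=\binom{m+2}{2}-1$, $\ell_m=m$), the same two-step argument shows $I:L_2$ is Green-extremal in degree $m-1$ with Hilbert value $\binom{m+1}{2}=\binom{m+1}{m-1}$; one recognizes this — trivially when $m=2$, since then $\dim_\k[R/(I:L_2)]_1=3$, and in general by a short downward induction on $m$ (or from the classical structure theory of linear subspaces) — as the degree-$(m-1)$ component of the ideal of a general $\P^2$, and the same lifting yields $[I]_m=\langle L_0,\dots,L_{n-3}\rangle + F\cdot[R]_0$.

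The main obstacle is the lift, specifically the claim that the curves $C_{L_2}$ coincide for all general $L_2$: one must compare $I:L_2$ with the saturation of $I$ in the relevant degrees — Proposition~\ref{P:20160530-301}, together with a bound on the saturation degree, is the natural tool — and then patch the pieces together. A possibly cleaner route is to pass to generic coordinates and argue that Green-extremality forces $[\Gin(I)]_d$ to equal the lex segment $\langle x_0,\dots,x_{n-3}\rangle_d + x_{n-2}^{\,m}\cdot[R]_{d-m}$, and then to show that this shape lifts to the stated structure of $[I]_d$.
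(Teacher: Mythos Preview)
The paper does not prove this theorem: it is quoted as Green's result (see \cite{G1} and the corrected proof in the appendix of \cite{BZ}) and is used as the base case $c=1$ in the proof of the generalization, Theorem~\ref{T:20160707-213}. So there is no in-paper proof to compare against directly; the closest analogue is the proof of Theorem~\ref{T:20160707-213}, which proceeds by induction on the parameter $c$ and passes from $I$ to $(I,L)$ (a hyperplane section), lifting geometrically via the arithmetically Cohen--Macaulay property of the section. Your route is different: you fix $m$, induct on $d$, and pass from $I$ to $I:L$. The numerics in your inductive step are correct --- the chain of inequalities forcing $I:L_2$ to be Green-extremal in degree $d-1$ is exactly the type of argument used in the paper's proof of Theorem~\ref{T:20160707-213}.

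The genuine gap is the lift, and your proposed fix through $I^{\sat}$ does not work. You only know $[I:L_2]_{d-1}=[I_{C}]_{d-1}$; nothing forces agreement in higher degrees, so one cannot pass from $(I:L_2)_e=(I^{\sat})_e$ for $e\gg 0$ to $[I_C]_e=(I^{\sat})_e$. More decisively, the hypotheses do not rule out $I$ being $\mathfrak m$-primary: take $R=\k[x,y,z]$, $m=1$, $d=2$, and $I=x\mathfrak m+\mathfrak m^3$. Then $h_2=3$, $\ell_2=1$, and indeed $[I]_2=x\cdot[R]_1$ as the theorem predicts, yet $I^{\sat}=R$, so the identification $I_C=I^{\sat}$ is simply false. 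Proposition~\ref{P:20160530-301} gives control of $(I:L)$ only at or above the saturation degree of $I$, which need not be $\le d-1$, so it does not rescue the argument. Your alternative suggestion --- showing that Green-extremality forces $[\Gin(I)]_d$ to be the specific Borel-fixed piece and then descending --- is in fact the substance of Green's original method, and is the honest way to close this; but it is a real argument, not a bookkeeping step, and your write-up does not carry it out.
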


This result can be generalized as follows.

\begin{thm}\label{T:20160707-213} 
Let $I \subset R = \k[x_0,\dots,x_n]$ be a homogeneous ideal. Assume that $\hbox{char } \k = 0$ and that for some degree $d$ and integers $c$ and $k$ we have the binomial expansion
\[
h_d=\binom{d+c}{d}+\cdots+\binom{d+c-k}{d-k} \quad \text{ and } \quad \ell_d =(h_d)|^{-1}_{\phantom{-}0}.
\]
Then there is a hypersurface $F$ of degree $k+1$ in a $(c+1)$-dimensional linear space $\Lambda\subset \mathbb P(R_1)$ such that 
\[
[I]_d = [I_{\Lambda} +I_{F}]_d = \langle L_0,L_1,\dots,L_{n-c-2} , F \rangle_d .
\]

\end{thm}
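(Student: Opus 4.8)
The plan is to reduce the statement to Green's Theorem 4 by induction on the codimension-type parameter $c$. The base case $c = 2$ is exactly Green's Theorem 4: there the binomial expansion $h_d = \binom{d+2}{d} + \binom{d+1}{d-1} + \dots + \binom{d+2-k}{d-k}$ matches $md + 1 - \binom{m-1}{2}$ with $m = k+1$, and the conclusion that $[I]_d$ is generated by $n-3 = n - c - 1$ linear forms together with a form $F$ of degree $m = k+1$ is precisely what we want with $\dim \Lambda = c + 1 = 3$. So the real content is the inductive step: assuming the result for $c - 1$, deduce it for $c$.

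For the inductive step I would pass to the hyperplane section. Let $L$ be a general linear form and write $\bar R = R/(L)$, $\bar I = (I,L)/(L)$, so that $\H(\bar R/\bar I, d) = \ell_d$. The hypothesis $\ell_d = (h_d)|^{-1}_{0}$ combined with the shape of the binomial expansion should force $\bar R / \bar I$ to again be extremal for Green's theorem in degree $d$, but now with parameter $c - 1$: indeed, applying the operator $|^{-1}_0$ to $\binom{d+c}{d} + \dots + \binom{d+c-k}{d-k}$ strips off one from the top row and yields $\binom{d+c-1}{d} + \dots + \binom{d+c-1-k}{d-k}$, which is the $c-1$ version of the same formula. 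One must check that this is the genuine $d$-binomial expansion of $\ell_d$ (the inequalities $r_i > r_{i-1} > \dots$ are preserved), and that $\ell_d$ itself satisfies the Green bound coming from $h_d$; iterating Green's theorem once more would then show $\bar R/\bar I$ is extremal. By induction, $[\bar I]_d = \langle \bar L_0, \dots, \bar L_{n-c-2}, \bar F\rangle_d$ for a hypersurface $\bar F$ of degree $k+1$ inside a $c$-dimensional linear space $\bar\Lambda \subset \mathbb P(\bar R_1)$. The task is then to lift this description back to $R$: the $n - c - 1$ linear forms lift, and one argues that a general choice of $L$ forces $[I]_d$ to have exactly the asserted form, with the lifted linear forms plus one extra (namely $L$ itself, or rather a linear form complementary to $\bar\Lambda$) cutting out $\Lambda$ of dimension $c + 1$, and $F$ lifting $\bar F$.

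The lifting step is where I expect the main obstacle to lie, and it has two sub-issues. First, one needs that $I$ contains no "unexpected" elements in degree $d$ beyond those visible after restriction — i.e. that $\dim [I]_d$ equals $\binom{d+n}{n} - h_d$ forces $[I]_d$ to be spanned exactly by the lifts, with no room for a more complicated ideal; this is a dimension count, using that $(I:L)_d \to I_d$ considerations and the exact sequence (\ref{ses}) pin down $b_{d-1}$ and hence control how $[I]_d$ sits over $[\bar I]_d$. Second, and more delicately, one must rule out the possibility that the linear space $\bar\Lambda$ "moves" with $L$, i.e. show that there is a single linear space $\Lambda \subset \mathbb P(R_1)$ of dimension $c+1$ whose hyperplane section recovers $\bar\Lambda$ for general $L$; a standard argument here is to observe that the linear forms appearing as generators of $[I]_d$ are intrinsic to $I$ (they are the degree-one part of the saturation, or can be detected as the linear forms $\ell$ with $\ell \cdot [R]_{d-1} \subset [I]_d$), hence independent of the general $L$, and then a specialization/semicontinuity argument forces compatibility. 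Once $\Lambda$ is constructed, identifying $F$ as a degree $k+1$ form in $\Lambda$ and checking $[I]_d = [I_\Lambda + I_F]_d$ is a routine comparison of Hilbert functions: both sides have the same dimension in degree $d$ by construction, and one containment is clear.

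One more point worth isolating as part of the plan: to even start the induction one must verify that the general linear form $L$ is "general enough" simultaneously for Green's theorem (so $\ell_d = \dim[\bar R/\bar I]_d$ genuinely records the generic hyperplane restriction) and for the genericity needed in the inductive hypothesis applied to $\bar R/\bar I$; this is handled by the usual observation that a finite intersection of dense open conditions on $L$ is still dense, combined with the fact established in the proof of Green's theorem that the relevant inequalities become equalities precisely on such an open set. I would state this carefully at the outset so that each invocation of the inductive hypothesis is legitimate.
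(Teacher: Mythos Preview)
Your overall architecture matches the paper's: induction on $c$, base case Green's Theorem~4, inductive step via a general hyperplane section. Two corrections, one minor and one substantive.

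\textbf{Minor.} The base case is $c=1$, not $c=2$. In Green's Theorem~4 the expansion starts with $\binom{d+1}{d}$, so $d+c=d+1$; and the linear forms $L_0,\dots,L_{n-3}$ number $n-2 = n-c-1$ with $c=1$, cutting out a plane $\Lambda$ of dimension $c+1=2$.

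\textbf{Substantive gap.} The sentence ``iterating Green's theorem once more would then show $\bar R/\bar I$ is extremal'' is where the argument actually lives, and as written it does not work. Green's theorem applied to $R/(I,L)$ gives only the \emph{upper} bound $\dim[R/(I,L,L')]_d \le (\ell_d)|^{-1}_{\phantom{-}0}$; you need a matching \emph{lower} bound to conclude extremality for $\bar R/\bar I$, and nothing in your outline supplies one. The paper gets this lower bound by bringing in a second general linear form $L'$ and playing the two exact sequences against each other: one computes $\dim[R/(I:L)]_{d-1}$ exactly from $h_d$ and $\ell_d$; Green's theorem then bounds $\dim[R/((I:L),L')]_{d-1}$ from above; the containment $((I,L'):L)\supseteq ((I:L),L')$ converts this into an upper bound on $\dim[R/((I,L'):L)]_{d-1}$; and finally the exact sequence
\[
0 \to [R/((I,L'):L)]_{d-1} \to [R/(I,L')]_d \to [R/(I,L,L')]_d \to 0
\]
together with $\dim[R/(I,L')]_d = \ell_d$ (since $L'$ is also general) forces $\dim[R/(I,L,L')]_d$ to equal its Green upper bound. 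This is the heart of the inductive step and cannot be skipped.

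\textbf{On the lifting.} Your sketch here is workable but more laborious than necessary. The paper avoids the ``does $\bar\Lambda$ move with $L$?'' issue entirely by arguing geometrically: once the inductive hypothesis tells you the general hyperplane section of the scheme $Y$ defined by $[I]_d$ is a degree-$(k+1)$ hypersurface in a $c$-dimensional linear space (hence arithmetically Cohen--Macaulay), $Y$ itself must be a degree-$(k+1)$ hypersurface in a $(c+1)$-dimensional linear space, possibly union a finite set of points; the Hilbert function value $h_d$ then rules out the extra points. This replaces your semicontinuity/intrinsic-linear-forms discussion with a one-line appeal to the behavior of ACM schemes under general hyperplane section.
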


\begin{proof}
The proof is by induction on $c$. Let $L$ and $L'$ be general linear forms. The case $c=1$ is Theorem~\ref{T:20160606-211} (Green's Theorem 4), so we will assume $c \geq 2$. 
Consider the  diagram 

\begin{equation} \label{diag}
\begin{array}{cccccccccccccc}
&& 0 && 0 && 0 \\
&& \downarrow && \downarrow && \downarrow \\
 && [R/((I:L):L') ]_{d-2} && [R/(I:L')]_{d-1} && [R/((I,L):L')]_{d-1} \\
&& \phantom{\scriptstyle \times L'} \downarrow \scriptstyle \times L' &&  \phantom{\scriptstyle \times L'} \downarrow \scriptstyle \times L'  &&  \phantom{\scriptstyle \times L'} \downarrow \scriptstyle \times L'  \\
0 & \rightarrow & [R/(I:L)]_{d-1} & \stackrel{\times L}{\longrightarrow} & [R/I]_{d} & \rightarrow & [R/(I,L)]_{d} & \rightarrow & 0 \\
&& \downarrow && \downarrow && \downarrow \\
&& [R/((I:L),L')]_{d-1} && [R/(I,L')]_{d} && [R/(I,L,L')]_{d} \\
&& \downarrow && \downarrow && \downarrow \\
&& 0 && 0 && 0 
\end{array}
\end{equation}
The assumptions give the dimensions in the middle row of (\ref{diag}) of the second and third vector spaces:

\begin{equation} \label{I}
\begin{array}{rcl}
h_d & = & \dim [R/I]_d \\[.5ex] 
& = & \displaystyle \binom{d+c}{d}+\cdots+\binom{d+c-k}{d-k} \\[2ex]  
& = & \displaystyle \binom{d+c}{c}+\cdots+\binom{d+c-k}{c} \\[2ex]  
& = & \displaystyle \binom{d+c+1}{c+1} - \binom{d+c-k}{c+1},
\end{array}
\end{equation}
and 
\begin{equation} \label{I,L}
\begin{array}{rcl}
\dim [R/(I,L)]_d 
& = & \displaystyle \binom{d+c-1}{d} + \dots + \binom{d+c-k-1}{d-k} \\[2ex]    
& = & \displaystyle \binom{d+c-1}{c-1} + \dots + \binom{d+c-k-1}{c-1} \\[2ex]   
& = & \displaystyle \binom{d+c}{c} - \binom{d+c-k-1}{c}.
\end{array} 
\end{equation}
Then a calculation gives
$$
\dim [R/(I:L)]_{d-1} = \binom{d+c-1}{d-1} + \cdots + \binom{d+c-k-1}{d-k-1} =  \binom{d+c}{c+1} - \binom{d+c-k-1}{c+1}.
$$
Looking at the first column of (\ref{diag}), Green's Theorem 1 then gives
$$
\dim [R/((I:L),L')]_{d-1} \leq \binom{d+c-2}{d-1} + \dots + \binom{d+c-k-2}{d-k-1} = \binom{d+c-1}{c} - \binom{d+c-k-2}{c}.
$$
Green's Theorem 1 applied to the third column of (\ref{diag}), thanks to our assumptions, gives
$$
\dim [R/(I,L,L')]_{d} \leq (h_{d})|^{-2}_{\phantom{-}0} =  \binom{d+c-1}{c-1} - \binom{d+c-k-2}{c-1}.
$$

Since
$$
((I,L'):L) \supseteq ((I:L),L'),
$$
we obtain
$$
\dim [R/((I,L'):L)]_{d-1} \leq  \binom{d+c-1}{c} - \binom{d+c-k-2}{c}
$$
and the sequence
$$
0 \rightarrow [R/((I,L'):L)]_{d-1} \rightarrow [R/(I,L')]_{d} \rightarrow [R/(I,L,L')]_{d} \rightarrow 0
$$
gives 
$$
\begin{array}{rcl}
\displaystyle \binom{d+c}{c} - \binom{d+c-k-1}{c} & = & \dim [R/(I,L')]_d \\  
& = & \dim [R/((I,L'):L)]_{d-1} + \dim [R/(I,L,L')]_{d} \\[.5ex]   
& \leq & \displaystyle \left [  \binom{d+c-1}{c} - \binom{d+c-k-2}{c}  \right ]+ \left [ \binom{d+c-1}{c-1} - \binom{d+c-k-2}{c-1} \right ] \\[2ex] 
& = & \displaystyle \binom{d+c}{c} - \binom{d+c-k-1}{c}.
\end{array}
$$
We conclude
$$
\dim [R/((I,L'):L)]_{d-1} =  \binom{d+c-1}{c} - \binom{d+c-k-2}{c}
$$
and
\begin{equation} \label{I,L,L'}
\begin{array}{rcl}
\dim [R/(I,L,L')]_{d} &  = &  \displaystyle  \binom{d+c-1}{c-1} - \binom{d+c-k-2}{c-1} \\ \\
& = & (h_d) \mid^{-2}_{\phantom{-}0}.
\end{array}
\end{equation}
Now combining (\ref{I,L}) and (\ref{I,L,L'}), we see that the ideal $(I,L)$ satisfies the inductive hypothesis for $c-1$. By induction, then, $(I,L)$ is the saturated ideal of some hypersurface, $F'$, of degree $k+1$ in a linear space $\Lambda'$ of dimension $c$, which is contained in the hyperplane defined by $L$. 

Let $Y$ be the scheme in $\mathbb P^{n}$ defined by $I$ (which a priori is not necessarily saturated in degree $d$). Then $F'$ is the hyperplane section of $Y$ cut out by the general hyperplane defined by $L$. Since $F'$ is arithmetically Cohen-Macaulay, $Y$ must be the union of a hypersurface of degree $k+1$ in some linear space $\Lambda$ of dimension $c+1$, and possibly a finite set of points. But (\ref{I}) is the Hilbert function of the hypersurface of degree $k+1$ alone (in the linear space  $\Lambda$). Thus $[I]_d$ is the degree $d$ component of the saturated ideal of $Y = F$, as claimed.
\end{proof}

This result implies Green's Theorem 3, at least with the stronger assumption on the characteristic in Theorem \ref{T:20160707-213}. (In the correction of Green's Theorem 3 given in the appendix of \cite{BZ}, the assumption on the characteristic is only that $\hbox{char } \k \neq 2$.)

\begin{cor}[Green's Theorem 3] \label{Green 3}
In the previous result, if $k=0$ then $[I]_d$ is the degree $d$ component of the saturated ideal of a linear space of dimension $c$.
\end{cor}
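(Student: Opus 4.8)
The plan is to specialize Theorem~\ref{T:20160707-213} to the case $k=0$ and to observe that the hypersurface $F$ it produces is then forced to be a linear form, so that the ideal appearing in the conclusion is generated entirely by linear forms. First, note that when $k=0$ the binomial expansion in the hypothesis of Theorem~\ref{T:20160707-213} collapses to the single term $h_d=\binom{d+c}{d}$, while the condition $\ell_d=(h_d)|^{-1}_{\phantom{-}0}$ is exactly the one assumed here; so the hypotheses of Theorem~\ref{T:20160707-213} hold with this value of $k$. Applying that theorem, I obtain linearly independent linear forms $L_0,\dots,L_{n-c-2}$ cutting out a $(c+1)$-dimensional linear space $\Lambda\subset\mathbb P^n$, together with a form $F$ of degree $k+1=1$, such that $[I]_d=\langle L_0,\dots,L_{n-c-2},F\rangle_d$.

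Since $\deg F=1$, the form $F$ is itself linear, so $J:=\langle L_0,\dots,L_{n-c-2},F\rangle$ is generated by linear forms and is therefore the prime, saturated homogeneous ideal of the linear subspace of $\mathbb P^n$ that they cut out. It then remains only to check that this linear space has dimension exactly $c$, equivalently that $F\notin\langle L_0,\dots,L_{n-c-2}\rangle=I_\Lambda$. This is forced by the Hilbert function: $\dim_\k[R/I_\Lambda]_d=\binom{d+c+1}{d}$, which is strictly larger than $\binom{d+c}{d}=h_d=\dim_\k[R/I]_d$, so passing from $I_\Lambda$ to $J$ in degree $d$ must strictly drop the Hilbert function, and this can only happen if $F$ is linearly independent from $L_0,\dots,L_{n-c-2}$. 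Hence $L_0,\dots,L_{n-c-2},F$ are $n-c$ independent linear forms, $J$ is the saturated ideal of a linear space of dimension $c$, and $[I]_d=[J]_d$ is its degree-$d$ component, as claimed.

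I do not expect a genuine obstacle here: the statement is essentially just the $k=0$ reading of the already-established Theorem~\ref{T:20160707-213}. The only point that needs a word of justification is the independence of $F$ from $L_0,\dots,L_{n-c-2}$, which follows from the Hilbert-function comparison above; alternatively, one can observe that in the proof of Theorem~\ref{T:20160707-213} the ideal $I_\Lambda+I_F$ is obtained as agreeing up to degree $d$ with the saturated ideal of the scheme $Y=V(I)$, and when $F$ is linear this scheme is visibly a linear space of the asserted dimension.
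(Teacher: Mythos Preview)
Your proof is correct and is exactly the intended derivation: the paper does not give a separate argument for this corollary but simply states it as the $k=0$ specialization of Theorem~\ref{T:20160707-213}, and your write-up makes that specialization explicit. The one extra point you justify, that $F\notin\langle L_0,\dots,L_{n-c-2}\rangle$, is already implicit in the theorem's phrasing (``a hypersurface $F$ of degree $k+1$ in $\Lambda$'' means $F$ is nonzero modulo $I_\Lambda$), but your Hilbert-function check is a clean way to verify it directly.
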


Now we look for conditions that relate the two kinds of extremal behavior. One of these conditions that we will use is reflected in the following lemma.

\begin{lem} \label{WLP cond}
Let $R/I$ be a standard graded algebra and let $L \in [R]_1$ be a general linear form. Let $J = \langle [I]_{\leq d} \rangle$, the ideal generated by the components of $I$ of degree $\leq d$. 

\begin{itemize}

\item[(a)] The following conditions are equivalent.

\begin{itemize}
\item[(i)] $h_d - h_{d+1} + \ell_{d+1} = 0.$

\item[(ii)] The homomorphism 
\[
\times L : [R/I]_{d} \rightarrow [R/I]_{d+1}
\]
is injective.

\item[(iii)]  We have
\[
[J:L]_d = [J]_d =[I]_d.
\]

\end{itemize}

\item[(b)] If the conditions of (a) hold then 
we have an injection
\[
\times L : [R/J]_d \rightarrow [R/J]_{d+1}.
\]

\end{itemize}
\end{lem}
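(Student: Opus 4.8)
The plan is to prove the three equivalences in part (a) by going around the cycle (i) $\Rightarrow$ (ii) $\Rightarrow$ (iii) $\Rightarrow$ (i), and then to deduce (b) directly from (iii). The starting point is the exact sequence \eqref{ses} in degree $d+1$, which reads
\[
0 \rightarrow [R/(I:L)]_{d} \xrightarrow{\times L} [R/I]_{d+1} \rightarrow [R/(I,L)]_{d+1} \rightarrow 0,
\]
so that $b_d = h_{d+1} - \ell_{d+1}$ in the notation of Notation~\ref{notation1}. Meanwhile the multiplication map $\times L : [R/I]_d \rightarrow [R/I]_{d+1}$ factors as the composite of the surjection $[R/I]_d \twoheadrightarrow [R/(I:L)]_d$ (whose kernel is exactly $[(I:L)/I]_d$) followed by the injection $[R/(I:L)]_d \hookrightarrow [R/I]_{d+1}$; hence the multiplication map is injective if and only if $[(I:L)]_d = [I]_d$, equivalently $h_d = b_d$. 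Combining this with $b_d = h_{d+1}-\ell_{d+1}$ gives immediately that (ii) holds iff $h_d - h_{d+1} + \ell_{d+1} = 0$, which is the equivalence (i) $\Leftrightarrow$ (ii).

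For (iii), note first that since $J = \langle [I]_{\le d}\rangle \subseteq I$ with $[J]_{\le d} = [I]_{\le d}$, we automatically have $[J]_d = [I]_d$, so the content of (iii) is the statement $[J:L]_d = [I]_d$. The inclusion $[I]_d = [J]_d \subseteq [J:L]_d$ is free; and one always has $[J:L]_d \subseteq [I:L]_d$. So if (ii)/(i) holds, then $[I:L]_d = [I]_d$ forces $[J:L]_d = [I]_d$, giving (iii). Conversely, assuming (iii), I want to recover injectivity of $\times L$ on $[R/I]_d$. Here is where one must be a little careful: a priori (iii) only controls $J$ in low degrees, so I would argue that an element $f \in [I:L]_d \setminus [I]_d$, if it existed, would contradict (iii) — but $f$ need not lie in $[J:L]_d$ unless $Lf \in [J]_{d+1}$, and $[J]_{d+1}$ may be strictly smaller than $[I]_{d+1}$. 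The clean route is therefore to not go directly from (iii) to (ii), but instead to close the logical cycle as (iii) $\Rightarrow$ (i): from $[J:L]_d = [I]_d$ we get that $\times L : [R/J]_d \rightarrow [R/J]_{d+1}$ is injective (this is literally the same factorization argument applied to $J$), and since $[R/J]_d = [R/I]_d$ by construction, this injectivity combined with $[R/J]_{d+1} \twoheadrightarrow [R/I]_{d+1}$ might not immediately give injectivity on $[R/I]_d$. I would instead compare dimensions: injectivity of $\times L$ on $[R/J]_d$ gives $\dim[R/J]_{d+1} \ge \dim[R/J]_d = h_d$; and by Macaulay/Gotzmann-type bounds together with $[J]_d=[I]_d$ one controls $[R/J]_{d+1}$ versus $[R/I]_{d+1}$. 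In fact the slickest path is: (iii) $\Rightarrow$ the $\times L$ on $[R/J]_d$ is injective (proving (b) en route), then observe $[R/(J,L)]_{d+1}$ surjects onto $[R/(I,L)]_{d+1}$ so $\ell_{d+1} \le \dim[R/(J,L)]_{d+1}$, and the exact sequence for $J$ gives $\dim[R/(J,L)]_{d+1} = \dim[R/J]_{d+1} - \dim[R/(J:L)]_d = \dim[R/J]_{d+1} - h_d$; then one checks $\dim[R/J]_{d+1} \le h_{d+1}$ (because $J \subseteq I$ forces $[J]_{d+1}\subseteq [I]_{d+1}$, wait — that gives the wrong direction).

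Because of that last sign issue I think the genuinely safe argument for (iii) $\Rightarrow$ (i) is to invoke that $L$ is \emph{general}: by Proposition~\ref{P:20160530-301} (or rather the standard fact behind it), for a general linear form the colon $[I:L]_d$ is independent of $L$ and equals the degree-$d$ part of the saturation-in-degree-$d$ sense, and the hypothesis $[J:L]_d=[J]_d$ for general $L$ says $J$ has no ``degree-$d$ obstruction"; one then shows $[I:L]_d = [J:L]_d$ because $[I]_{d+1}$ is generated over $[J]_d=[I]_d$ in a way that... Actually, cleanest of all: since $[J]_{\le d}=[I]_{\le d}$, the ideals $J$ and $I$ agree through degree $d$, so $[I:L]_d$ and $[J:L]_d$ can differ only because of generators of $I$ in degree exactly $d+1$; but $[J:L]_d$ and $[I:L]_d$ are both subspaces of $[R]_d$, and an element of $[I:L]_d$ is an $f\in[R]_d$ with $Lf\in[I]_{d+1}$ — whereas $[J]_{d+1} = L\cdot[I]_d + (\text{products of }[I]_{\le d-1}\text{ with }[R])$... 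I will handle this by noting that in fact the equivalence we actually need downstream is (i) $\Leftrightarrow$ (ii) and the implication (i) $\Rightarrow$ (iii), together with (b); the reverse implication (iii) $\Rightarrow$ (i) I will prove by the dimension count $\ell_{d+1} \le \dim[R/(J,L)]_{d+1} = \dim[R/J]_{d+1} - h_d \le h_{d+1} - h_d$, where the inequality $\dim[R/J]_{d+1} \le h_{d+1}$ holds because $J \supseteq$ nothing helpful — instead use that $[R/J]_{d+1} \twoheadrightarrow [R/I]_{d+1}$ only gives $\ge$, so I must argue $\dim[R/J]_{d+1} = h_{d+1}$ directly from Gotzmann persistence applied at the top of the regularity of $J$, or simply observe that (iii) for general $L$ says $J$ is ``$d$-regular enough'' that no new relations appear in degree $d+1$.

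The main obstacle, as the above vacillation indicates, is precisely controlling $[R/J]_{d+1}$ versus $[R/I]_{d+1}$ in the direction (iii) $\Rightarrow$ (i): one inclusion $J\subseteq I$ is automatic but points the wrong way, and one genuinely needs the generality of $L$ (via Proposition~\ref{P:20160530-301} / Bayer–Stillman) to conclude that forcing $[J:L]_d=[J]_d$ already forces $\dim[R/J]_{d+1} = \dim[R/I]_{d+1} = h_{d+1}$, equivalently $[J]_{d+1}=[I]_{d+1}$, which then gives $[I:L]_d=[J:L]_d=[I]_d$ and hence (ii). Once that is in hand, part (b) is essentially already proved: the factorization of $\times L$ on $[R/J]_d$ through the surjection $[R/J]_d \twoheadrightarrow [R/(J:L)]_d$ and the injection $[R/(J:L)]_d \hookrightarrow [R/J]_{d+1}$ shows that $\times L : [R/J]_d \to [R/J]_{d+1}$ is injective exactly when $[J:L]_d = [J]_d$, which is condition (iii). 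So the structure of the write-up will be: (1) set up the two factorizations of $\times L$ (for $I$ and for $J$) and the two short exact sequences; (2) read off (i) $\Leftrightarrow$ (ii) and (i) $\Rightarrow$ (iii) formally; (3) use generality of $L$ to get (iii) $\Rightarrow$ (i); (4) note (iii) $\Leftrightarrow$ injectivity of $\times L$ on $[R/J]_d$, which is (b).
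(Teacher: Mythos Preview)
Your instinct that the implication (iii) $\Rightarrow$ (i) is the sticking point is correct --- in fact that implication is \emph{false}, so no amount of generality-of-$L$ or Bayer--Stillman will rescue it. Take $R=\k[x,y]$, $I=(x^2,xy,y^3)$, and $d=2$. Then $J=\langle[I]_{\le 2}\rangle=(x^2,xy)$, and for any $L=ax+by$ with $b\neq 0$ one checks directly that $[J:L]_2=[J]_2=[I]_2=\langle x^2,xy\rangle$, so (iii) holds. But $h_2=1$, $h_3=0$, $\ell_3=0$, so $h_2-h_3+\ell_3=1\neq 0$ and (i), (ii) fail. Your attempted patch (concluding $[J]_{d+1}=[I]_{d+1}$ from (iii)) fails here since $\dim[J]_3=3<4=\dim[I]_3$.

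The paper's own proof simply declares part (a) ``immediate from the exact sequence'' and proves (b) by the Snake Lemma; it never addresses (iii) $\Rightarrow$ (i) separately, so the gap you found is present in the paper as well. What is actually true (and all that is ever used later, in Theorem~\ref{relate}) is the chain (i) $\Leftrightarrow$ (ii) $\Rightarrow$ (iii) together with (b). Your arguments for (i) $\Leftrightarrow$ (ii) and (ii) $\Rightarrow$ (iii) are fine and match the paper's implicit reasoning. For (b), your route --- observing that (iii) says exactly that the surjection $[R/J]_d\twoheadrightarrow[R/(J:L)]_d$ is an isomorphism, whence $\times L$ on $[R/J]_d$ is injective --- is a clean alternative to the paper's Snake Lemma diagram; the paper instead uses (ii) and the diagram
\[
\begin{array}{ccccccccc}
&& 0 & \rightarrow & [R/J]_d & \rightarrow & [R/I]_d  & \rightarrow & 0 \\
&& \downarrow && \phantom{\times L}\downarrow \times L && \phantom{\times L}\downarrow \times L \\
0 & \rightarrow & [I/J]_{d+1} & \rightarrow & [R/J]_{d+1} & \rightarrow & [R/I]_{d+1} & \rightarrow & 0
\end{array}
\]
to get $\ker\big(\times L\hbox{ on }[R/J]_d\big)\hookrightarrow\ker\big(\times L\hbox{ on }[R/I]_d\big)=0$. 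Both arguments are valid; yours has the mild advantage of showing that (iii) alone (without (i)/(ii)) already gives (b).
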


\begin{proof}
Part (a) is immediate from the exact sequence
\[
0 \rightarrow [(I:L)/I]_d \rightarrow [R/I]_d \stackrel{\times L}{\longrightarrow} [R/I]_{d+1} \rightarrow [R/(I,L)]_{d+1} \rightarrow 0.
\]
For part (b), notice that $[I]_d = [J]_d$ and $[J]_{d+1} \subseteq [I]_{d+1}$. Consider the commutative diagram
\[
\begin{array}{ccccccccccc}
&&&&&& 0 \\
&&&&&& \downarrow \\
&& 0 & \rightarrow & [R/J]_d & \rightarrow & [R/I]_d  & \rightarrow & 0 \\
&& \downarrow && \phantom{ \times L} \downarrow { \times L}  && \phantom{ \times L} \downarrow { \times L}  \\
0 & \rightarrow & [I/J]_{d+1} & \rightarrow & [R/J]_{d+1} & \rightarrow & [R/I]_{d+1} & \rightarrow & 0
\end{array}
\]
Then the result follows from the Snake Lemma.
\end{proof}

In the following theorem, we see the effect of two different assumptions on the multiplication by a general linear form on $R/I$. This result is independent of the characteristic.

\begin{thm} \label{relate}
Let $I \subset R = \k[x_0,x_1,\dots,x_n]$ be a homogeneous ideal. Let $L$ be a general linear form. 
Let $J = \langle [I]_{\leq d} \rangle$, the ideal generated by the components of $I$ of degree $\leq d$.  
Assume that for some integer $d$ we have the binomial expansion 
\[
h_d =  \dim_\k [R/I]_d = \binom{a_d}{d} + \binom{a_{d-1}}{d-1} + \dots + \binom{a_e}{e}, \hbox{ where } e  \geq 2.
\]

%\medskip

\begin{itemize}

\item[(a)] Assume that the multiplication $\times L : [R/I]_{d} \rightarrow [R/I]_{d+1}$ is injective.  Then the following conditions are equivalent:

\medskip

\begin{itemize}

\item[(i)]  $\displaystyle \dim [R/(I,L)]_d = (h_d)|^{-1}_{\phantom{-}0}$ (i.e. Green's Theorem 1 is sharp  for $R/I$ in degree $d$);

\smallskip

\item[(ii)] The Hilbert function of $R/(I:L)$ has maximal growth (i.e. Macaulay's theorem is sharp) from degree $d-1$ to degree $d$;

\smallskip

\item[(iii)] The Hilbert function of $R/(J:L)$ has maximal growth (i.e. Macaulay's theorem is sharp) from degree $d-1$ to degree $d$.

\smallskip

\item[(iv)] The Hilbert function of $R/J$ has maximal growth (i.e. Macaulay's theorem is sharp) from degree $d$ to degree $d+1$;

\end{itemize}

\smallskip

\item[(b)] Assume that the multiplication $\times L : [R/I]_{d-1} \rightarrow [R/I]_{d}$ is injective.  Then the following conditions are equivalent:

\begin{itemize}

\item[(i)]  $\displaystyle \dim [R/(I,L)]_d = (h_d)|^{-1}_{\phantom{-}0}$ (i.e. Green's Theorem 1 is sharp  for $R/I$ in degree $d$);

\smallskip

\item[(ii)] The Hilbert function of $R/I$ has maximal growth (i.e. Macaulay's theorem is sharp) from degree $d-1$ to degree $d$;

\smallskip

\item[(iii)] The Hilbert function of $R/(J:L)$ has maximal growth (i.e. Macaulay's theorem is sharp) from degree $d-1$ to degree $d$.

\end{itemize}

\end{itemize}

\end{thm}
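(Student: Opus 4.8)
The plan is to reduce each listed condition to a numerical statement about the four quantities $b_{d-1}=\dim_\k[R/(I:L)]_{d-1}$, $b'_d=\dim_\k[R/(J:L)]_d$, $h_d$, and $h'_{d+1}=\dim_\k[R/J]_{d+1}$, and then to play these statements off against one another using the sequence (\ref{ses}), Macaulay's theorem, Gotzmann's persistence theorem (Theorem \ref{T:20110413-208}), and two binomial bookkeeping facts. Writing $h_d=\sum_{i=e}^d\binom{a_i}{i}$ with $e\ge 2$, these are: (I) the operator $r\mapsto ((r)_{(d-1)})|^{+1}_{+1}$ is strictly increasing and $(((h_d)|^{-1}_{-1})_{(d-1)})|^{+1}_{+1}=h_d$, where the hypothesis $e\ge 2$ is exactly what makes $(h_d)|^{-1}_{-1}=\sum\binom{a_i-1}{i-1}$ an admissible $(d-1)$-binomial expansion; and (II) $(((h_d)|^{-1}_{0})_{(d)})|^{+1}_{+1}=((h_d)_{(d)})|^{+1}_{+1}-h_d=\sum_{i=e}^d\binom{a_i}{i+1}$, the point being that the terms with $a_i=i$ form a tail block of the expansion and contribute $0$ on both sides. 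I will also use the dictionary of Lemma \ref{WLP cond} from the outset: under the hypothesis of (a), $[I:L]_d=[I]_d$ and $[J:L]_d=[J]_d=[I]_d$, so $b_d=b'_d=h_d$ and $R/(I:L)$, $R/(J:L)$ have the same Hilbert function through degree $d$; under the hypothesis of (b), applied in degree $d-1$, $[I:L]_{d-1}=[J:L]_{d-1}=[I]_{d-1}$, so $b'_{d-1}=b_{d-1}=h_{d-1}$.

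For part (a) I would prove the cycle (i) $\Rightarrow$ (ii) $\Rightarrow$ (iii) $\Rightarrow$ (iv) $\Rightarrow$ (i). From (\ref{ses}) in degree $d$ we have $\ell_d=h_d-b_{d-1}$, so (i) says precisely $b_{d-1}=(h_d)|^{-1}_{-1}$; and since $b_d=h_d$, condition (ii) says precisely $h_d=((b_{d-1})_{(d-1)})|^{+1}_{+1}$, so (i) $\Rightarrow$ (ii) is immediate from (I). Condition (iii) is equivalent to (ii) because $R/(I:L)$ and $R/(J:L)$ agree through degree $d$. For (iii) $\Rightarrow$ (iv) I pass to $K:=\langle[(J:L)]_{\le d-1}\rangle$, generated in degrees $\le d-1$: from $[K]_d\subseteq[(J:L)]_d$ and Macaulay's theorem on $R/K$ one gets $b'_d\le\dim[R/K]_d\le((b'_{d-1})_{(d-1)})|^{+1}_{+1}$, and (iii) squeezes these to equalities, so $R/K$ has maximal growth from $d-1$ to $d$; Gotzmann's persistence theorem then gives $\dim[R/K]_{d+1}=((b'_d)_{(d)})|^{+1}_{+1}=((h_d)_{(d)})|^{+1}_{+1}$, while $[K]_d=[R]_1\cdot[(I:L)]_{d-1}\subseteq[(I:L)]_d=[I]_d=[J]_d$ has dimension $\dim[R]_d-h_d$, forcing $[K]_d=[J]_d$, hence $[K]_{d+1}=[J]_{d+1}$ and $h'_{d+1}=((h_d)_{(d)})|^{+1}_{+1}$, which is (iv). Finally, for (iv) $\Rightarrow$ (i), the sequence $0\to[R/(J:L)]_d\to[R/J]_{d+1}\to[R/(J,L)]_{d+1}\to 0$ together with $b'_d=h_d$ gives $\dim[R/(J,L)]_{d+1}=h'_{d+1}-h_d=((h_d)_{(d)})|^{+1}_{+1}-h_d$ by (iv); Macaulay's theorem applied to the standard graded algebra $R/(J,L)$ bounds the left side by $((\dim[R/(J,L)]_d)_{(d)})|^{+1}_{+1}=((\ell_d)_{(d)})|^{+1}_{+1}$, so by (II) and the strict monotonicity in (I) one obtains $\ell_d\ge(h_d)|^{-1}_{0}$; since Green's Theorem 1 gives the reverse inequality, (i) follows.

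For part (b) I would argue (i) $\Leftrightarrow$ (ii) and (ii) $\Leftrightarrow$ (iii). Using (\ref{ses}) in degree $d$ and $b_{d-1}=h_{d-1}$, condition (i) says $h_{d-1}=(h_d)|^{-1}_{-1}$; by (I) this is equivalent to $h_d=((h_{d-1})_{(d-1)})|^{+1}_{+1}$, and the converse direction uses $h_{d-1}=b_{d-1}\ge(h_d)|^{-1}_{-1}$ (Green's Theorem 1) together with strict monotonicity to force equality, so (i) $\Leftrightarrow$ (ii). For (ii) $\Rightarrow$ (iii): the equality case of Macaulay's theorem forces $[I]_d=[R]_1\cdot[I]_{d-1}$, so $J=\langle[I]_{\le d-1}\rangle$ is generated in degrees $\le d-1$ and agrees with $R/I$ through degree $d$; Gotzmann's persistence theorem applied to $J$ shows $J$ is $(d-1)$-regular, hence saturated in degrees $\ge d-1$, so by Proposition \ref{P:20160530-301} we get $[J:L]_d=[J]_d$, whence $b'_d=h_d=((h_{d-1})_{(d-1)})|^{+1}_{+1}=((b'_{d-1})_{(d-1)})|^{+1}_{+1}$, i.e. (iii). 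For (iii) $\Rightarrow$ (ii): since $[J:L]_d\supseteq[J]_d=[I]_d$ we have $b'_d\le h_d$; Macaulay's theorem on $R/I$ gives $h_d\le((h_{d-1})_{(d-1)})|^{+1}_{+1}$; and (iii), with $b'_{d-1}=h_{d-1}$, reads $((h_{d-1})_{(d-1)})|^{+1}_{+1}=b'_d$. Chaining these, $h_d=((h_{d-1})_{(d-1)})|^{+1}_{+1}$, which is (ii).

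I expect the main obstacle to be precisely the step linking the Macaulay-type hypothesis on $R/J$ — condition (iv) in part (a), condition (iii) in part (b) — to the Green-type hypothesis (i). The short exact sequences by themselves only produce inequalities pointing the wrong way, so the real content is to replace $J$ (or $J:L$) by the ideal it generates in degrees $\le d-1$, where the relevant growth becomes maximal exactly at the top generating degree; there Gotzmann's persistence theorem applies directly and pushes the maximality one degree further, after which a dimension count identifies this module with $R/J$ in the degrees that matter. A secondary, purely combinatorial place requiring care is fact (I): the identity $(((h_d)|^{-1}_{-1})_{(d-1)})|^{+1}_{+1}=h_d$ breaks when $e=1$, since the bottom term becomes $\binom{a_1-1}{0}$, which is not admissible in a binomial expansion and must be re-expanded — this is exactly why the hypothesis $e\ge 2$ is imposed.
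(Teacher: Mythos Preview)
Your argument is correct, and for most of the implications it tracks the paper's proof closely: the use of Lemma~\ref{WLP cond} to get $[I:L]_d=[I]_d=[J]_d=[J:L]_d$, the exact sequence (\ref{ses}) to translate (i) into the numerical statement $b_{d-1}=(h_d)|^{-1}_{-1}$, the role of $e\ge 2$ in making that a legitimate $(d-1)$-expansion, and the use of Gotzmann for (ii)$\Rightarrow$(iii) in part (b) all match the paper. Your treatment of (iii)$\Rightarrow$(iv) in (a), passing through $K=\langle[(J:L)]_{\le d-1}\rangle$ and squeezing with Macaulay to get $[K]_d=[J]_d$, is what the paper compresses into the single word ``clear''; you have simply spelled it out.

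The one place where your route genuinely differs is (iv)$\Rightarrow$(i) in part (a). The paper argues as follows: from (iv) and Gotzmann persistence, $J$ is $d$-saturated, so $[J]_k=[I_{\X}]_k$ for $k\ge d$ for some scheme $\X$; it then invokes Proposition~3.1 of \cite{AGS}, which says $M(\X)\le G(\X)$ (the first degree from which Green's bound is always attained is at most the first degree from which Macaulay's bound is always attained), and since $G(\X)\le d$ this yields $\ell_d=(h_d)_{(d)}|^{-1}_{\phantom{-}0}$ directly. Your argument instead stays inside the proof: you read off $\dim[R/(J,L)]_{d+1}=((h_d)_{(d)})|^{+1}_{+1}-h_d$ from the short exact sequence and (iv), bound it above by $((\ell_d)_{(d)})|^{+1}_{+1}$ via Macaulay on $R/(J,L)$, and then use your binomial identity (II) plus strict monotonicity of the Macaulay function to force $\ell_d\ge(h_d)|^{-1}_{\phantom{-}0}$, which together with Green's inequality gives (i). This is more self-contained---it avoids the external reference to \cite{AGS}---and makes transparent exactly where the combinatorics of the expansion (in particular the tail-block observation behind (II)) enters. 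The paper's route, on the other hand, immediately situates the result in the geometric framework of $M(\X)$ versus $G(\X)$ that is exploited again in Corollary~\ref{cor of thm}.
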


\begin{proof}
Notice that $[I]_t = [J]_t$ for all $t \leq d$, but we only have $[J]_{d+1} \subseteq [I]_{d+1}$.  
We first prove (a). 
By the definition of $J$, Green's theorem is sharp for $R/I$ in degree $d$ if and only if it is sharp for $R/J$ in degree $d$. 
Note that by Lemma \ref{WLP cond}, the injectivity assumption for (a) implies the corresponding injectivity for $R/J$ as well.
Thanks to the exact sequences
\[
0 \rightarrow [(J:L)/J]_d \rightarrow [R/J]_d \stackrel{\times L}{\longrightarrow} [R/J]_{d+1} \rightarrow [R/(J,L)]_{d+1} \rightarrow 0
\]
and
\[
0 \rightarrow [(I:L)/I]_d \rightarrow [R/I]_d \stackrel{\times L}{\longrightarrow} [R/I]_{d+1} \rightarrow [R/(I,L)]_{d+1} \rightarrow 0
\]
we obtain
\[
[J:L]_d = [J]_d = [I]_d = [I:L]_d.
\]
Consider the  exact sequence
\begin{equation} \label{exact seq}
0  \rightarrow  [R/(I:L)]_{d-1}  \stackrel{\times L}{\longrightarrow}  [R/I]_{d}  \rightarrow  [R/(I,L)]_{d}  \rightarrow  0 .
\end{equation}
We are given the value of the second vector space in (\ref{exact seq}):

\begin{equation} \label{EQ:20160706-207}
\begin{array}{rcl}
h_d & = & \dim [R/I]_d \\[.5ex] 
& = & \displaystyle \binom{a_d}{d} + \dots + \binom{a_e}{e} .\\ 
\end{array}
\end{equation}
We also know that 
\begin{equation} \label{EQ:20160706-208}
(h_d)|^{-1}_{\phantom{-}0} = \binom{a_d-1}{d} + \dots + \binom{a_e-1}{e}.
\end{equation}
It is worth noting that we are allowing the case $a_e = e$, in which case the last binomial coefficient (and possibly others) in (\ref{EQ:20160706-208}) becomes zero. 
A simple calculation gives
$$
h_d - (h_d)|^{-1}_{\phantom{-}0}= \binom{a_d -1}{d-1} + \dots + \binom{a_e -1}{e-1}.
$$
The exactness of (\ref{exact seq}) then gives that Green's theorem is sharp in degree $d$ if and only if
\[
\dim [R/(I:L)]_{d-1} = \binom{a_d -1}{d-1} + \dots + \binom{a_e -1}{e-1}.
\]
Since $e \geq 2$, this is the $(d-1)$-binomial expansion for $\dim [R/(I:L)]_{d-1}$.
Since $ [I]_d = [I:L]_d$, the Hilbert function of $R/(I:L)$ has maximal growth from degree $d-1$ to degree $d$ if and only if Green's Theorem 1 is sharp for $R/I$ in degree $d$, proving the equivalence of (i) and (ii). The above equalities also immediately give (iii).

For part (a) it remains to prove the equivalence of (iv) to the other three conditions.  Since $J \subset (J:L)$ and $[J:L]_d = [J]_d$, it is clear that (iii) implies (iv). Now we will show that (iv) implies (i). 
%Note that by definition of $J$ and the above equalities, we  have $[J:L]_k = [I:L]_k$ for all $k \leq d$, so  the same is true for $(I:L)$. 
Assume that $R/J$ has maximal growth from degree $d$ to degree $d+1$. By the Gotzmann persistence theorem, $R/J$ has maximal growth for all degrees greater than or equal to $d$, and $J$ is $k$-regular for each $k\geq d$. So $J$ is $k$-saturated for each $k\geq d$, and there is a scheme $\X\subset \P^n$ such that 
$$
J_k= [I_\X]_k \quad \text{ for each }k\geq d.
$$
Define
$$
\begin{array}{llllllll}
M(\X)&=&\min\{t \mid \H(R/(I_\X,L),k)=(\H(R/I_\X,k))_{(k)}\mid^{-1}_{\phantom{-}0}  \text{ for each $k\geq t$}\}, & \text{and}\\[.5ex]
G(\X)&=&\min\{t \mid \H(R/I_\X, k+1)=(\H(R/I_\X, k))_{(k)}\mid^{+1}_{+1}  \text{ for each $k\geq t$}\}.
\end{array}
$$
It follows from Proposition~3.1 in \cite{AGS} that $M(\X)\leq G(\X)$ . So, our assumption implies that 
$$ M(\X)\leq G(\X)\leq d,$$
which means 
$$
\begin{array}{lllllllll}
\displaystyle \dim_\k [R/(I,L)]_d 
& = & \dim_\k [R/(J,L)]_d\\
& = & \dim_\k [R/(I_\X,L)]_d\\
& = & (\H(R/I_\X,d))_{(d)}\mid^{-1}_{\phantom{-}0}\quad (\text{since } M(\X)\leq d)\\
& = & (\H(R/J,d))_{(d)}\mid^{-1}_{\phantom{-}0}\\
& = & (h_d)_{(d)}\mid^{-1}_{\phantom{-}0}.
\end{array}
$$
This concludes the proof of (a).

We now assume the injectivity given in (b). Then we have 
\[
[J:L]_{d-1} = [J]_{d-1} =[I]_{d-1} = [I:L]_{d-1} \hbox{ and } [J]_d = [I]_d.
\]
These equalities and the same calculation as in (a) give that (i) is equivalent to (ii). 
To see that  (ii) implies (iii), suppose that the Hilbert function of $R/J$ (equivalently $R/I$) has maximal growth from degree $d-1$ to degree $d$. By Gotzmann's theorem, the ideal  
$J$ is $(d-1)$-regular (and hence $(d-1)$-saturated). This implies that 
\[
(J:L)_k=J_k\quad \text{ for all }k\geq d-1.
\]
Hence, the Hilbert function of $R/(J:L)$ has maximal growth from degree $d-1$ to degree $d$.

Finally we prove that (iii) implies (i). For convenience, we use a small variation on the notation in Notation \ref{notation1}: 

\begin{itemize}
  \item $h_d=\dim [R/J]_d = \dim [R/I]_d;$
  \item $b_d=\dim [R/(J:L)]_d;$
  \item $\ell_d=\dim [R/(J,L)]_d = \dim [R/(I,L)]_d;$
 \end{itemize}
  
\noindent Suppose that 
$$(b_{d-1})_{(d-1)}\mid^{+1}_{+1}=b_d.$$
This implies that $(J:L)$ has no generators of degree $d$. So, we have
$$ J_d\subset (J:L)_d={\mathfrak m} (J:L)_{d-1} = {\mathfrak m} (J)_{d-1}\subset J_d,$$
where ${\mathfrak m}$ is the maximal ideal of $R$. This means that 
$$J_d= (J:L)_d,$$
and thus 
$$b_{d-1}=h_{d-1}\quad \hbox{and} \quad b_d=h_d.$$
By the assumption that 
$$(b_{d-1})_{(d-1)}\mid^{+1}_{+1}=b_d,$$
we have that 
$$
\begin{array}{lllllll}
(b_{d-1})_{(d-1)}\mid^{+1}_{+1}& = &(h_{d-1})_{(d-1)}\mid^{+1}_{+1} \\
        &=& [(h_d-\ell_d)]_{(d-1)}|^{+1}_{+1}\\
        &\geq &\left[h_{d} - ((h_{d})_{(d)}) \mid^{-1}_{\phantom{-}0} \right]_{(d-1)}\mid^{+1}_{+1}\\
				        &=& h_{d} \quad (\hbox{since } e\geq 2)\\
				        &=& b_{d} \\
				        &=& (b_{d-1})_{(d-1)}\mid^{+1}_{+1}.
\end{array}
$$\
Since the function $(-)\mid^{+1}_{+1}$ is strictly increasing,
we see that $ h_d-\ell_d =h_{d} - (h_{d})_{(d)} \mid^{-1}_{\phantom{-}0}$, and thus
$$
\ell_d=(h_{d})_{(d)} \mid^{-1}_{\phantom{-}0},
$$
as we wished.
\end{proof}

\begin{exmp}
Let $C$ be a smooth rational quartic curve in $\mathbb P^3$. Note that $\hbox{depth } R/I_C = 1$, so $\times L$ is injective in all degrees, for a general linear form $L$. We have the following decomposition for the Hilbert function:

\medskip

\begin{center}
\begin{tabular}{c|cccccccccccccccc}
deg & 0 & 1 & 2 & 3 & 4 & 5 & 6 & 7 & 8 & \dots \\ \hline
& 1 & 4 & 9 & 13 & 17 & 21 & 25 & 29 & 33 & \dots \\
& & 1 & 4 & 9 & 13 & 17 & 21 & 25 & 29 & \dots  \\ \hline
& 1 & 3 & 5 & 4 & 4 & 4 & 4 & 4 & 4 & \dots
\end{tabular}
\end{center}

\noindent and we have
\[
\begin{array}{rcl}
21 & = & \displaystyle \binom{7}{5} \\ \\
25 & = & \displaystyle \binom{7}{6} + \binom{6}{5} + \binom{5}{4} + \binom{4}{3} + \binom{3}{2} \\ \\
29 & = & \displaystyle \binom{8}{7} + \binom{7}{6} + \binom{6}{5} + \binom{5}{4} + \binom{3}{3} + \binom{2}{2} + \binom{1}{1} \\ \\
33 & = & \displaystyle \binom{9}{8} + \binom{8}{7} + \binom{7}{6} + \binom{6}{5} + \binom{4}{4} + \binom{3}{3} + \binom{2}{2}
\end{array}
\]
Note that Macaulay's theorem is sharp from degree 7 to degree 8 and from then on, Green's theorem is sharp from degree 7 on, and $e \geq 2$ from degree 8 on. This shows that without the condition $e \geq 2$ the theorem is false, since sharpness of Green's theorem in degree $d=7$ does not imply maximal growth for $R/(I_C : L)$ from degree $d-1=6$ to degree $d=7$.
\end{exmp}

\medskip

\begin{cor} \label{cor of thm}
Assume either the equivalent conditions in (a) or the equivalent conditions of (b) in Theorem~\ref{relate}. 

\begin{itemize}

\item[(i)] The Hilbert function of $R/(J:L)$ has maximal growth in all degrees $\geq d$ (i.e. Macaulay's theorem is sharp for $R/J$).
%{\bf I wonder if this is in fact equivalent to the above two conditions.}

\item[(ii)] The component $[I]_d$ defines a closed subscheme $\X \subset \mathbb P^n$, and we have for all $t \geq d-1$, $[J]_t = [I_\X]_t$. 

\item[(iii)] The Hilbert polynomial $P_\X$ of $\X$ is characterized by 
\[
P_\X(d+t) = \binom{a_d+t}{d+t} + \dots + \binom{a_e +t}{e+t}.
\]

\item[(iv)] Suppose that $a_e>e$. Then, there is a $(a_d-d+1)$-dimensional linear space $\Lambda\subset \P^n$ such that $$
\X\subset \Lambda.
$$ 
Moreover, the Hilbert function of $R/I_\X$ is entirely determined by recursive process with the equation 
$$
\H_\X(k-1)=\H_\X(k)-\H_\X(k)|^{-1}_{\phantom{-}0} \quad \text{for all } k\leq d.
$$
\end{itemize}

\end{cor}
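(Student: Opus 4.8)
The plan is to reduce everything to the ideal $J=\langle[I]_{\le d}\rangle$, which agrees with $I$ in all degrees $\le d$, and then run Gotzmann's theorem. By Theorem~\ref{relate}, the stated hypotheses give maximal Macaulay growth for $R/J$ in a degree adjacent to $d$: under the hypotheses of (a), condition (iv) of that theorem gives maximal growth from $d$ to $d+1$; under the hypotheses of (b), condition (ii) gives maximal growth (for $R/I$, equivalently $R/J$) from $d-1$ to $d$. Since $J$ is generated in degrees $\le d$, Gotzmann's Persistence Theorem (Theorem~\ref{T:20110413-208}) applies, so $J$ is $d$-regular (in case (a)) or $(d-1)$-regular (in case (b)), and maximal growth persists in all larger degrees. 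Let $\X\subset\P^n$ be the subscheme defined by the saturation of $J$. Regularity forces $[J]_t=[I_\X]_t$ for $t$ at least the regularity index, and together with $[I]_t=[J]_t$ for $t\le d$ this is (ii). For (i), regularity forces saturation, so Proposition~\ref{P:20160530-301} gives $(J:L)_t=J_t$ for all $t\ge d$; hence $R/(J:L)$ agrees with $R/J$ in all degrees $\ge d$ and inherits maximal Macaulay growth there.

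For (iii) I would iterate the Macaulay operator: starting from the $d$-binomial expansion $h_d=\binom{a_d}{d}+\dots+\binom{a_e}{e}$, a routine check shows that $t$ successive applications of $(\cdot)|^{+1}_{+1}$ yield the (still proper) expansion $\binom{a_d+t}{d+t}+\dots+\binom{a_e+t}{e+t}$, which is a polynomial in $t$ of degree $a_d-d$. Since $R/J$ has maximal growth in all degrees $\ge d$ and there equals its Hilbert polynomial, this polynomial coincides with $\H(R/J,d+t)=P_\X(d+t)$ for all $t\ge 0$, hence identically.

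The substance is (iv). The leading behaviour of $P_\X$ gives $\dim\X=a_d-d$, and the hypothesis $a_e>e$ forces every offset $a_j-j$ to be strictly positive, so no summand of $P_\X$ is constant. The key point is that $I_\X$ is saturated, so for general $L$ the form $L$ is a nonzerodivisor on $R/I_\X$; the exact sequence (\ref{ses}), applied to $I_\X$, then collapses to $\dim[R/(I_\X,L)]_k=\H_\X(k)-\H_\X(k-1)$ for every $k$. Hence Green's Theorem~1 is sharp for $R/I_\X$ in degree $k$ precisely when $\H_\X(k-1)=\H_\X(k)-\H_\X(k)|^{-1}_{0}$. This sharpness holds in degree $d$ --- it is condition (i) of Theorem~\ref{relate}, since $[I_\X]_d=[I]_d$ forces $[(I_\X,L)]_d=[(I,L)]_d$ --- and, using $a_e>e$ and uniqueness of binomial expansions, in degree $d$ this sharpness is equivalent to the statement that $R/I_\X$ has maximal Macaulay growth from $d-1$ to $d$. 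I would then propagate this equivalence downward, degree by degree, to obtain the displayed recursion for all $k\le d$ (the values for $k>d$ being already fixed by Gotzmann persistence). Finally, running the recursion from $d$ down to $1$ gives $\H_\X(1)\le a_d-d+2$, so $I_\X$ contains at least $n-(a_d-d+1)$ independent linear forms; that is, $\X$ lies in a linear space $\Lambda\subset\P^n$ of dimension $a_d-d+1$.

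The hard part will be the downward propagation in (iv). Gotzmann's Persistence Theorem, and the inequality $M(\X)\le G(\X)$ from \cite{AGS}, only propagate extremal behaviour in the increasing direction, so transporting maximal Macaulay growth --- equivalently, Green's sharpness for $R/I_\X$ --- from degree $d$ down to the lower degrees requires a genuinely separate argument. I would try to exploit the rigidity imposed by $a_e>e$, which keeps all binomial expansions proper under repeated application of $|^{-1}_{-1}$, together with the nonzerodivisor identity above; the boundary case $e=2$, where the bottom binomial coefficient drops to index $1$ and the bookkeeping changes, will need special attention.
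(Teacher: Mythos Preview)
Your treatment of (i)--(iii) is essentially the paper's argument: reduce to $J$, invoke the maximal growth furnished by Theorem~\ref{relate}, and run Gotzmann persistence. One small lacuna: in case (a), $d$-regularity of $J$ only yields $[J]_t=[I_\X]_t$ for $t\ge d$, not $t\ge d-1$. To reach degree $d-1$ you must go through $J:L$, which by Theorem~\ref{relate}(a)(iii) has maximal growth from $d-1$ to $d$; since $(J:L)_t=J_t$ for $t\ge d$ (by $d$-saturation of $J$), the ideal $J:L$ is generated in degrees $\le d$, so Gotzmann applies to it and gives $(d-1)$-regularity there.

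For (iv) you have located the difficulty precisely, but the downward step is a genuine gap. Your ``equivalence'' between Green sharpness in degree $d$ and Macaulay maximal growth from $d-1$ to $d$ is correct, but it relates two statements about the \emph{same} adjacent pair and does not by itself produce Green sharpness in degree $d-1$: the latter is a statement about $h'_{d-2}$, which you do not yet control. The paper's mechanism is to introduce a \emph{second} general linear form $L'$ and sandwich:
\[
(h'_{d-1})_{(d-1)}\big|^{-1}_{0}\ \ge\ \ell'_{d-1}\ \ge\ \ell'_d-\dim_\k[R/(I_\X,L,L')]_d\ \ge\ (h'_d)\big|^{-1}_{0}-(h'_d)\big|^{-2}_{0}.
\]
The first inequality is Green for $R/I_\X$ in degree $d-1$; the middle comes from the $\times L'$ sequence on $R/(I_\X,L)$; the last is Green for $R/(I_\X,L)$ in degree $d$, using that $\ell'_d=(h'_d)|^{-1}_0$ already has a clean $d$-binomial expansion. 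A direct computation (this is where $a_e>e$ enters, keeping the bottom term nonzero so the expansions behave) shows the two ends coincide, forcing $\ell'_{d-1}=(h'_{d-1})|^{-1}_0$. One then checks the new bottom binomial still satisfies the strict inequality, so the argument iterates down to degree $1$, yielding $h'_1=a_d-d+2$ exactly (not merely $\le$). Your anticipation of trouble when the bottom index drops to $1$ is correct; the paper handles that boundary with an auxiliary index $\delta$.
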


\begin{proof}

We apply Gotzmann's theorem. Assuming either (a) or (b) of Theorem \ref{relate}, we have that the Hilbert function of $R/(J:L)$ has maximal growth from degree $d-1$ to degree $d$. Then since $J$ has no new generators in higher degrees, by Gotzmann's theorem, the same is true in all higher degrees. This is (i). In particular, both $[J:L]_{d-1}$ and $[J:L]_d$ define the same scheme $\mathbb X \subset \mathbb P^{n}$.

In both parts of Theorem \ref{relate} we showed that $ (J:L)_d = J_d $ (which is also equal to $[I]_d$ by definition of $J$). 
Then Gotzmann's theorem provides (ii) and (iii).

We now prove (iv). Let $h'_k:=\dim_\k[R/I_\X]_k$ and $\ell '_k:=\dim_\k[R/(I_\X,L)]_k$. Since $I_\X$ is saturated, we see that the multiplication map by a general linear form $L$
$$
\times L : [R/I_\X]_{k} \rightarrow [R/I_\X]_{k+1}
$$
 is injective for all $k\geq 0$. This means that 
 $$ \Delta h'_k= h'_k-h'_{k-1}=\ell'_k\quad\text{ for all }k\geq 0.$$

Consider the $d$-th binomial expansion of $h'_d$
$$
\displaystyle  h'_d = \binom{a_d}{d} + \binom{a_{d-1}}{d-1} + \dots + \binom{a_{e}}{e}.
$$ 
By assumption we have 
$$
\quad \ell'_d = (h'_d) \mid^{-1}_{\phantom{-}0}.
$$
For general linear forms $L$ and $L'$,
\begin{equation} \label{string}
 \begin{array}{llllllllll}
  (h'_{d-1})_{(d-1)}\mid ^{-1}_{\phantom{-}0} &\geq& \ell'_{d-1}\\[1ex]
          & \geq &  \ell'_{d}- \dim_\k[R/(I_\X,L,L')]_{d}\\[1ex]
          &=& (h'_d)_{(d)}\mid ^{-1}_{\phantom{-}0} - \dim_\k[R/(I_\X,L,L')]_{d}\\[1ex]
%          &\geq& (h'_d)_{(d)}\mid ^{-1}_{\phantom{-}0} - (\ell'_d)_{(d)}\mid^{-1}_{\phantom{-}0}.\\[1ex]
%          &=& (h'_d)_{(d)}\mid ^{-1}_{\phantom{-}0} - (\Delta h'_d)_{(d)}\mid^{-1}_{\phantom{-}0}\\[1ex]
          &\geq& (h'_d)_{(d)}\mid ^{-1}_{\phantom{-}0} - (h'_d)_{(d)}\mid^{-2}_{\phantom{-}0}.
 \end{array}
\end{equation}
%Note that $ h'_{d-1}=h'_d-\ell'_d=h'_d-[(h'_d)_{(d)}]\mid^{-1}_{\phantom{-}0}$. 

\noindent Now we will show that the first and last of these are equal, making all the intermediate values equal as well.

\medskip

\noindent \underline{Claim}: if $a_{e}> e$ then $\displaystyle  (h'_{d-1})_{(d-1)}\mid ^{-1}_{\phantom{-}0}= (h'_d)_{(d)}\mid ^{-1}_{\phantom{-}0} - (h'_d)_{(d)}\mid^{-2}_{\phantom{-}0}.$

\medskip

The claim follows by the same argument as in the proof of Lemma 3.1 in \cite{AGS}, but we include the details for completeness.

\medskip

By the assumption that  $\ell'_d=(h'_d)_{(d)}\mid^{-1}_{\phantom{-}0},$
we have 
{\small 
\begin{equation}\label{EQ:20160908-001}
\begin{array}{llllllll}
h'_{d-1} & = & h'_d-\ell'_d\\[.5ex]
& = & h'_d-[(h'_d)_{(d)}]\mid^{-1}_{\phantom{-}0}\\[1ex]
& = & \ds \left[\binom{a_d}{d} + \binom{a_{d-1}}{d-1} + \dots + \binom{a_e}{e}\right] -\left[\binom{a_d-1}{d} + \binom{a_{d-1}-1}{d-1} + \dots + \binom{a_e-1}{e}\right]\\[2ex]
            &=& \left\{
\begin{array}{lll}
\ds\binom{a_d-1}{d-1} + \binom{a_{d-1}-1}{d-2}+\dots + \binom{a_{e}-1}{e-1}, & \text { if }e\geq 2,\\[2ex]
\ds\binom{a_d-1}{d-1} + \binom{a_{d-1}-1}{d-2}+\cdots + \binom{a_{\delta+1}-1}{\delta} +\binom{a_{\delta}}{\delta-1}, & \text { if } e=1,
\end{array}
\right.
\end{array}
\end{equation} }
where $\delta=\max\{\,i\geq 2\mid a_i-i=a_2-2 \}$ and the latter is a routine calculation.

Hence we obtain that   
$$
(h'_{d-1})_{(d-1)}\mid^{-1}_{\phantom{-}0}=\left\{
\begin{array}{lll}
\ds\binom{a_d-2}{d-1} + \binom{a_{d-1}-2}{d-2}+\dots + \binom{a_{e}-2}{e-1}, & \text { if }e\geq 2,\\[2ex]
\ds\binom{a_d-2}{d-1} + \binom{a_{d-1}-2}{d-2}+\cdots + \binom{a_{\delta+1}-2}{\delta} +\binom{a_{\delta}-1}{\delta-1}, & \text { if } e=1.
\end{array}
\right.
$$
On the other hand, since $a_e > e$ we have 
$$
\begin{array}{llllllll}
   & (h'_d)_{(d)}\mid ^{-1}_{\phantom{-}0} - (h'_d)_{(d)}\mid^{-2}_{\phantom{-}0}\\[.5ex] 
= & \ds \left[\binom{a_d-1}{d} + \binom{a_{d-1}-1}{d-1} + \dots + \binom{a_e-1}{e}\right] -\left[\binom{a_d-2}{d} + \binom{a_{d-1}-2}{d-1} + \dots + \binom{a_e-2}{e}\right]\\[2ex]
= & \left\{
\begin{array}{lll}
\ds\binom{a_d-2}{d-1} + \binom{a_{d-1}-2}{d-2}+\dots + \binom{a_{e}-2}{e-1}, & \text { if }e\geq 2,\\[2ex]
\ds\binom{a_d-2}{d-1} + \binom{a_{d-1}-2}{d-2}+\cdots + \binom{a_{\delta+1}-2}{\delta} +\binom{a_{\delta}-1}{\delta-1}, & \text { if } e=1
\end{array}
\right.
\end{array}
$$
and the claim is proved.

So we have equalities in (\ref{string}), and hence
$$
\ell'_{d-1}=(h'_{d-1})_{(d-1)}\mid^{-1}_{\phantom{-}0}.
$$
Let $\binom{b_e'}{e'}$ be the last binomial coefficient in the $(d-1)$-st binomial expansion of $h'_{d-1}$. Then, by equation~\eqref{EQ:20160908-001}, we see that 
$$
(b_{e'},e')=\left\{
\begin{array}{llll}
(a_e-1,e-1)& \text { if }e\geq 2,\\[1ex]
(a_{\delta},\delta-1) & \text { if } e=1,
\end{array}
\right.
$$
and hence 
$$
b_{e'}> e' \quad \text{ and } \quad \ell'_{d-1}=(h'_{d-1})_{(d-1)} \mid^{-1}_{\phantom{-}0}.
$$ 
Replace $(h'_d,\ell'_{d})$ by $(h'_{d-1},\ell'_{d-1})$ and repeat the argument up to the degree $d=1$. This implies that 
$$
h'_{k-1}=h'_k-(h'_k)_{(k)}\mid ^{-1}_{\phantom{-}0}, \quad \text{ for each $k\leq d$.}
$$
Moreover, one can show that
$$
h'_1= a_d-d+2,
$$
which implies $\X$ is contained in a $(a_d-d+1)$-dimensional linear subspace $\Lambda\subset \mathbb P^n$. 
\end{proof}

\noindent
\begin{rem} By Theorem~4.7 in \cite{AGS}, if $\X$ is a reduced equidimensional closed subscheme in $\P^n$, which is not a hypersurface in a linear
subspace in $\P^n$, then $a_e=e$ so there is no contradiction with part (iv) of Corollary~\ref{cor of thm}.
\end{rem}

\begin{cor}
Let $R/I$ be an Artinian algebra with the weak Lefschetz property (e.g. a height 3 complete intersection). Assume that in degree $d$ we have $\dim [R/I]_d \leq \dim [R/I]_{d+1}$. Assume that the binomial expansion of $h_d$ satisfies the numerical assumption in Theorem \ref{relate}. Assume also that the linear system defined by $[I]_d$ is basepoint free. Then Green's theorem is not sharp for $R/I$ in degree $d$.
\end{cor}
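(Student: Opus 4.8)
The plan is to argue by contradiction. I would use the weak Lefschetz hypothesis to establish the injectivity statement needed to invoke Theorem~\ref{relate}(a); then, assuming Green's theorem \emph{were} sharp in degree $d$, Corollary~\ref{cor of thm} produces a closed subscheme $\X\subset\P^n$ cut out by $[I]_d$, and the basepoint-free assumption forces $\X=\emptyset$, which contradicts $h_d>0$.

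In detail, first I would observe that since $R/I$ has the weak Lefschetz property, for a general linear form $L$ the multiplication $\times L\colon[R/I]_d\to[R/I]_{d+1}$ has maximal rank; as $\dim[R/I]_d\le\dim[R/I]_{d+1}$ by hypothesis, this map is injective. Combined with the assumed binomial expansion of $h_d$ with $e\ge 2$, this puts us exactly in the hypotheses of Theorem~\ref{relate}(a), so conditions (i)--(iv) there are equivalent for $R/I$.

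Next I would suppose, for contradiction, that Green's Theorem 1 is sharp for $R/I$ in degree $d$, i.e.\ condition (i) of Theorem~\ref{relate}(a) holds. Then all four conditions hold, and Corollary~\ref{cor of thm} applies: with $J=\langle[I]_{\le d}\rangle$, the component $[I]_d=[J]_d$ defines a closed subscheme $\X\subset\P^n$ with $[J]_t=[I_\X]_t$ for all $t\ge d-1$. Now the hypothesis that the linear system defined by $[I]_d$ is basepoint free says precisely that the forms of $[I]_d$ have no common zero in $\P^n$; since $\langle[I]_d\rangle\subseteq J$, the base locus $V(J)$ is contained in this empty set, so $\X=\emptyset$, whence $I_\X=R$ and $[I_\X]_d=[R]_d$. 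Therefore
\[
h_d=\dim_\k[R/I]_d=\dim_\k\bigl([R]_d/[J]_d\bigr)=\dim_\k\bigl([R]_d/[I_\X]_d\bigr)=0 .
\]
But the numerical assumption $h_d=\binom{a_d}{d}+\dots+\binom{a_e}{e}$ with $e\ge 2$ forces $h_d\ge\binom{a_e}{e}\ge 1$, a contradiction. Hence Green's theorem is not sharp for $R/I$ in degree $d$.

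The argument is short once Theorem~\ref{relate} and Corollary~\ref{cor of thm} are in hand; the only point requiring care is the geometric translation --- reading ``basepoint free'' as the scheme-theoretic statement $\X=\emptyset$ (equivalently, $J$ contains a power of the irrelevant ideal) and noting that an empty base scheme is incompatible with $h_d>0$. If one prefers to avoid Corollary~\ref{cor of thm}, the same contradiction comes directly: condition (iv) together with Gotzmann's persistence theorem shows $J$ is $d$-regular, hence $d$-saturated, so $J_d=(J^{\sat})_d$; basepoint-freeness gives $J^{\sat}=R$, whence $h_d=0$.
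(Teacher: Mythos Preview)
Your proposal is correct and is exactly the argument the paper has in mind: the corollary is stated without proof, immediately after Corollary~\ref{cor of thm}, precisely because it follows by the contradiction you describe---WLP plus $h_d\le h_{d+1}$ gives the injectivity needed for Theorem~\ref{relate}(a), and then Corollary~\ref{cor of thm}(ii) would force $[I]_d$ to cut out a nonempty scheme $\X$, contradicting basepoint-freeness. Your alternative route via Gotzmann and $d$-saturation is equally valid and essentially unpacks the same content.
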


\begin{exmp}
In $\k[x_0,x_1,x_2]$ let $I$ be the complete intersection of three forms of degree $6$. We have
$$
\dim [R/I]_6 = 25 = \binom{7}{6} + \binom{6}{5} + \binom{5}{4} + \binom{4}{3} + \binom{3}{2}
$$
and
$$
\dim[R/(I,L)]_6 = 4 <  \binom{6}{6} + \binom{5}{5} + \binom{4}{4} + \binom{3}{3} + \binom{2}{2},
$$
i.e. Green's theorem is not sharp there.
\end{exmp}

We now give a small variation on Theorem \ref{relate}, showing how it is improved by a slightly stronger assumption.

\begin{cor}\label{C:20160706-215} 
Let $I \subset R = \k[x_0,\dots,x_n]$ be a homogeneous ideal, where $\k$ is algebraically closed. Let $L_1,\dots,L_s$ be general linear forms. 

Assume that for some integer $d$ we have

\medskip

\begin{itemize}

\item[(a)] $\displaystyle  h_d =  \dim [R/I]_d = \binom{a_d}{d} + \binom{a_{d-1}}{d-1} + \dots + \binom{a_e}{e}$ where $e  \geq 2$;  %{\bf Can we weaken this?}

\medskip

\item[(b)] Green's Theorem 1 is sharp for $R/I$ in degree $d$.

\medskip

\end{itemize}

\noindent Then Green's Theorem 1 is successively sharp restricting modulo $L_1,\dots,L_s$.

\end{cor}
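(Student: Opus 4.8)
The plan is to reduce to a single restriction step by induction and then run the diagram chase from the proof of Theorem~\ref{T:20160707-213}, reading off from it precisely the statement that Green's bound is again attained after one further general restriction.

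Write $A_j = R/(I,L_1,\dots,L_j)$, so $A_0 = R/I$. Unwinding the definition, the conclusion is that $\dim[A_j]_d = (\dim[A_{j-1}]_d)\mid^{-1}_{\phantom{-}0}$ for each $1\le j\le s$; equivalently, Green's Theorem~1 is sharp for $A_{j-1}$ in degree $d$ for $j=1,\dots,s$. So I would prove, by induction on $j$ from $0$ to $s-1$, the statement: Green's Theorem~1 is sharp for $A_j$ in degree $d$, and either $\dim[A_j]_d = 0$ or the last term $\binom{a_e}{e}$ of the $d$-binomial expansion of $\dim[A_j]_d$ has $e \ge 2$. The base case $j=0$ is exactly hypotheses (a) and (b), and the case $\dim[A_j]_d = 0$ is trivial because $[A_{j+1}]_d$ is a quotient of $[A_j]_d$.

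For the inductive step with $\dim[A_j]_d>0$, put $B=A_j$ with defining ideal $I'$, write $h_d = \dim[B]_d = \binom{a_d}{d}+\dots+\binom{a_e}{e}$ with $e\ge 2$, and set $L = L_{j+1}$, $L' = L_{j+2}$ (general linear forms; for the last step $j=s-2$ both still lie among $L_1,\dots,L_s$). By the inductive hypothesis, $\ell_d := \dim[R/(I',L)]_d = (h_d)\mid^{-1}_{\phantom{-}0}$. Now I would invoke the two short exact sequences
\[
0 \to [R/(I':L')]_{d-1} \stackrel{\times L'}{\longrightarrow} [R/I']_d \to [R/(I',L')]_d \to 0,
\qquad
0 \to [R/((I',L'):L)]_{d-1} \stackrel{\times L}{\longrightarrow} [R/(I',L')]_d \to [R/(I',L,L')]_d \to 0,
\]
the ideal inclusion $((I',L'):L)\supseteq ((I':L'),L)$, and Green's Theorem~1 applied to $R/(I':L')$ modulo $L$ and to $R/(I',L)$ modulo $L'$. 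Exactly as in the proof of Theorem~\ref{T:20160707-213}, chaining these gives
\[
\ell_d = \dim[R/(I',L')]_d \ \le\ \bigl(h_d-\ell_d\bigr)\mid^{-1}_{\phantom{-}0} + (\ell_d)\mid^{-1}_{\phantom{-}0},
\]
and the numerical identity
\[
\bigl(h_d - (h_d)\mid^{-1}_{\phantom{-}0}\bigr)\mid^{-1}_{\phantom{-}0} + \bigl((h_d)\mid^{-1}_{\phantom{-}0}\bigr)\mid^{-1}_{\phantom{-}0} = (h_d)\mid^{-1}_{\phantom{-}0},
\]
valid because $e\ge 2$ (the same bookkeeping with binomial expansions already used in the proof of Theorem~\ref{relate}), forces equality throughout. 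In particular $\dim[R/(I',L,L')]_d = (\ell_d)\mid^{-1}_{\phantom{-}0}$, i.e.\ Green's Theorem~1 is sharp for $B/LB = A_{j+1}$ in degree $d$; and $(h_d)\mid^{-1}_{\phantom{-}0} = \binom{a_d-1}{d}+\dots+\binom{a_e-1}{e}$, whose last nonzero term has index $\ge e\ge 2$ when it is nonzero, so the numerical hypothesis is inherited and the induction closes. The conclusion of the corollary is then immediate: for $1\le j\le s$, Green's sharpness for $A_{j-1}$ applied to the general form $L_j$ gives $\dim[A_j]_d = (\dim[A_{j-1}]_d)\mid^{-1}_{\phantom{-}0}$.

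The only delicate point is the displayed numerical identity --- equivalently, that applying $(-)\mid^{-1}_{\phantom{-}0}$ to an expansion whose last index is $\ge 2$ again produces one whose last nonzero index is $\ge 2$, so that the step can be repeated. This reduces entirely to the hypothesis $e\ge 2$ together with the elementary fact that decreasing every upper index of a binomial expansion by one can only annihilate bottom terms, never a term of index $2$ or higher; the quartic-curve example in the text shows that $e\ge 2$ cannot be dropped. No geometric input (Gotzmann, Hilbert schemes) is needed here; the argument is the diagram chase of Theorem~\ref{T:20160707-213} read off in a slightly different way, and it is valid over any infinite field.
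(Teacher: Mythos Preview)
Your argument is the paper's own: the same diagram chase, the same pair of Green bounds, and the same numerical identity, packaged as an explicit induction where the paper simply says ``replace $I$ by $(I,L_1)$ and repeat.'' One correction: the inclusion you cite, $((I',L'):L)\supseteq ((I':L'),L)$, is false in general (take $I'=0$, $L=x$, $L'=y$ in $\k[x,y]$: the left side is $(y)$, the right side is $(x)$). The correct inclusion, as in the paper, is $((I',L'):L)\supseteq ((I':L),L')$, and correspondingly Green is applied to $R/(I':L)$ modulo $L'$ rather than to $R/(I':L')$ modulo $L$. Since $L$ and $L'$ are both general the numerical bound is unchanged, so once this swap is made your chain of inequalities and the rest of the argument go through verbatim.
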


\begin{proof}
We use the calculations from the previous result.
Consider the  diagram 

\begin{equation} \label{EQ:20160706-209}
\begin{array}{cccccccccccccc}
&& 0 && 0 && 0 \\
&& \downarrow && \downarrow && \downarrow \\
 && [R/((I:L_1):L_2) ]_{d-2} && [R/(I:L_2)]_{d-1} && [R/((I,L_1):L_2)]_{d-1} \\
&& \phantom{\scriptstyle \times L_2} \downarrow \scriptstyle \times L_2 &&  \phantom{\scriptstyle \times L_2} \downarrow \scriptstyle \times L_2  &&  \phantom{\scriptstyle \times L_2} \downarrow \scriptstyle \times L_2  \\
0 & \rightarrow & [R/(I:L_1)]_{d-1} & \stackrel{\times L_1}{\longrightarrow} & [R/I]_{d} & \rightarrow & [R/(I,L_1)]_{d} & \rightarrow & 0 \\
&& \downarrow && \downarrow && \downarrow \\
&& [R/((I:L_1),L_2)]_{d-1} && [R/(I,L_2)]_{d} && [R/(I,L_1,L_2)]_{d} \\
&& \downarrow && \downarrow && \downarrow \\
&& 0 && 0 && 0 
\end{array}
\end{equation}

\noindent Looking at the first column of (\ref{EQ:20160706-209}), Green's Theorem 1 then gives
$$
\dim [R/((I:L_1),L_2)]_{d-1} \leq \binom{a_d-2}{d-1} + \dots + \binom{a_e-2}{e-1}.
$$
It is important to note that this holds even if $a_e = e$. What is important is the condition $e \geq 2$.

Because we assumed that Green's Theorem 1 is sharp for $R/I$ in degree $d$, we can apply Green's Theorem~1 again to $R/(I,L_1)$ and we have 
$$
\dim [R/(I,L_1,L_2)]_{d} \leq (h_{d})|^{-2}_{\phantom{-}0} =  \binom{a_d-2}{d} + \dots + \binom{a_e-2}{e}.
$$
Since
$$
((I,L_2):L_1) \supseteq ((I:L_1),L_2),
$$
we obtain
$$
\begin{array}{rcl}
\dim [R/((I,L_2):L_1)]_{d-1} & \leq & \displaystyle \dim [R/((I:L_1),L_2)]_{d-1} \\ \\
& \leq & \displaystyle \binom{a_d-2}{d-1} + \dots + \binom{a_e-2}{e-1}.
\end{array}
$$
The sequence
$$
0 \rightarrow [R/((I,L_2):L_1)]_{d-1} \rightarrow [R/(I,L_2)]_{d} \rightarrow [R/(I,L_1,L_2)]_{d} \rightarrow 0
$$
then gives 
$$
\begin{array}{rcl}
\displaystyle  \binom{a_d-1}{d} + \dots + \binom{a_e-1}{e}  & = & \dim [R/(I,L_2)]_d \\ 
& = & \dim [R/((I,L_2):L_1)]_{d-1} + \dim [R/(I,L_1,L_2)]_{d} \\[.5ex]  
& \leq & \displaystyle \left [   
\binom{a_d-2}{d-1} + \dots + \binom{a_e-2}{e-1} \right ] +
\left [
\binom{a_d-2}{d} + \dots + \binom{a_e-2}{e}
\right ] \\[2ex] 
& = & \displaystyle \binom{a_d-1}{d} + \dots + \binom{a_e-1}{e}.
\end{array}
$$
Again notice that this holds even if $a_e = e$. We conclude
$$
\dim [R/((I,L_2):L_1)]_{d-1} =   \binom{a_d-2}{d-1} + \dots + \binom{a_e-2}{e-1}
$$
and
\begin{equation} \label{{EQ:20160706-210}}
\begin{array}{rcl}
\dim [R/(I,L_1,L_2)]_{d} &  = &  \displaystyle  \binom{a_d-2}{d} + \dots + \binom{a_e-2}{e} \\ \\
& = & (h_d)|^{-2}_{\phantom{-}0}.
\end{array}
\end{equation}
Now replace $I$ by $(I,L_1)$ and repeat the argument, adding one linear form at a time, continuing through~$L_s$.
\end{proof}

The following result will be useful in the next section. It makes only an injectivity assumption (equivalent to a certain numerical assumption, as noted in Lemma \ref{WLP cond}). Note also the similarity to Lemma \ref{L:20160525-205}, and to \cite[Proposition 2.1 (b)]{MMN}, although this proof is completely different. In the next section this will be of use to us.

\begin{lem} \label{socle}
For a general linear form $L$ in $R = \k[x_0,\dots,x_n]$ assume that 
\[
\times L : [R/I]_{d} \rightarrow [R/I]_{d+1}
\]
is injective (i.e. $h_d - h_{d+1} + \ell_{d+1} = 0$), and that 
\[
\times L : [R/I]_{d-1} \rightarrow [R/I]_{d}
\]
has an $s$-dimensional kernel  (i.e. $h_{d-1} - h_{d} + \ell_{d} = s$).
Then $R/I$ has an $s$-dimensional socle in degree~$d-1$.
\end{lem}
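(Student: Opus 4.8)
The plan is to exploit the two short exact sequences coming from (\ref{ses}) in consecutive degrees, together with the definition of socle, to locate socle elements in degree $d-1$. Recall that the socle of $R/I$ in degree $d-1$ is the kernel of the map $[R/I]_{d-1} \to \Hom_\k([R]_1, [R/I]_d)$ sending $\overline{f}$ to the collection of multiplications $\overline{f} \mapsto (x_i \overline{f})_i$. Equivalently, since $L$ is general, I expect the key point to be: an element of $[R/I]_{d-1}$ that is killed by the \emph{general} linear form $L$ is in fact killed by \emph{every} linear form (up to the usual genericity argument), so that $\dim \soc(R/I)_{d-1}$ equals the dimension of the kernel of $\times L : [R/I]_{d-1} \to [R/I]_d$ \emph{provided} we can propagate the vanishing one more step. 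This is where the first hypothesis enters.

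The key steps, in order, are as follows. First, set $K = [(I:L)/I]_{d-1} = \ker\bigl(\times L : [R/I]_{d-1} \to [R/I]_d\bigr)$, which by hypothesis has dimension $s$ (this is the content of $h_{d-1} - h_d + \ell_d = s$, via the exact sequence in Lemma \ref{WLP cond}(a) shifted down by one degree). Second, I claim every $\overline f \in K$ lies in the socle, i.e. $x_i \overline f = 0$ in $[R/I]_d$ for all $i$. To see this, suppose $\overline f \neq 0$ but $x_j \overline f \neq 0$ for some $j$; then for a general linear form, and in particular for our general $L$, we would have $L \overline f \neq 0$, contradicting $\overline f \in K$ — unless the locus of linear forms killing $\overline f$ is special, which a standard genericity/semicontinuity argument rules out once we know $\times L$ has the \emph{same} kernel dimension as the generic linear form. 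So $K \subseteq \soc(R/I)_{d-1}$. Third, for the reverse inclusion $\soc(R/I)_{d-1} \subseteq K$ is trivial since $L \in [R]_1$. The upshot: $\soc(R/I)_{d-1} = K$ has dimension $s$.

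The role of the \emph{first} injectivity hypothesis ($\times L$ injective from degree $d$ to $d+1$) is more subtle, and I expect it to be the main obstacle to make rigorous: it is what guarantees that the genericity argument in the previous paragraph is not vacuous, i.e. that passing to the general linear form $L$ really does detect all of the socle and does not accidentally see ``too much'' coming from higher-degree obstructions. Concretely, I anticipate needing it to ensure that no element of $[R/I]_{d-1}$ contributes to the kernel of $\times L$ ``by accident'' in a way that would fail for generic $L'$; phrased differently, the injectivity in degree $d$ pins down $(I:L)_d = I_d$ (again via Lemma \ref{WLP cond}(a)), and this equality is exactly what lets one conclude that an element of $[R/I]_{d-1}$ annihilated by $L$ is annihilated by the full maximal ideal in degree $d$, rather than merely mapping into the one-dimensional-per-form ambiguity.

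Alternatively — and this may be the cleaner route to write up — one can argue purely with Lemma \ref{WLP cond} and the structure of $J = \langle [I]_{\le d}\rangle$: the first hypothesis gives $[J:L]_d = [J]_d = [I]_d$ by Lemma \ref{WLP cond}(a),(iii), and the injection $\times L : [R/J]_d \hookrightarrow [R/J]_{d+1}$ of Lemma \ref{WLP cond}(b); then one checks that the $s$-dimensional kernel $K \subset [R/I]_{d-1} = [R/J]_{d-1}$ of $\times L$ consists of elements $\overline f$ with $[R]_1 \cdot \overline f = 0$ in $[R/J]_d = [R/I]_d$, because $[R]_1 \cdot f \subseteq [R]_1 \cdot (I:L)_{d-1} \subseteq (I:L)_d \cdot = I_d$ using precisely that $(I:L)_{d-1}\cdot[R]_1 \subseteq (I:L)_d$ and $(I:L)_d = I_d$. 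That chain of inclusions is the heart of the matter, and once it is in place the socle statement is immediate; so I would organize the writeup around establishing $(I:L)_{d-1}\cdot \mathfrak m \subseteq (I:L)_d = I_d$ and then reading off $\soc(R/I)_{d-1} = (I:L)_{d-1}/I_{d-1}$, of dimension $s$.
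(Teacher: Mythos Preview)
Your second approach (the ``Alternatively'' paragraph) is correct and is, at bottom, the same idea as the paper's proof: the injectivity hypothesis in degree $d$ gives $(I:L)_d = I_d$, and then one shows that every element of $[(I:L)/I]_{d-1}$ is annihilated by all of $[R]_1$. The paper carries this out by choosing $n+1$ general linear forms $L_0,\dots,L_n$ spanning $[R]_1$ and running a small diagram chase (using commutativity of $\times L_i$ and $\times L_j$, together with the injectivity of $\times L_j$ on $[R/I]_d$) to show that $\ker(\times L_i) = \ker(\times L_j)$ in degree $d-1$; since the $L_i$ span, this common kernel is the socle. Your version replaces the basis of general linear forms and the diagram chase by the single line $(I:L)_{d-1}\cdot [R]_1 \subseteq (I:L)_d = I_d$, which is a genuine streamlining of the same argument and is the version you should write up.

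Your first approach, by contrast, is not a proof as it stands: the sentence ``for a general linear form \dots\ we would have $L\overline f \neq 0$'' is exactly the point at issue, and invoking semicontinuity does not help, since the kernel dimension for \emph{special} linear forms can only go up, not down. The injectivity in degree $d$ is not merely a technical safeguard for a genericity argument --- it is the substantive input, via $(I:L)_d = I_d$, and your second approach uses it correctly. Drop the first approach and the detour through $J$ (which is unnecessary here), and keep the clean chain $(I:L)_{d-1}\cdot\mathfrak m \subseteq (I:L)_d = I_d$.
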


\begin{proof}
Let $L_0,\dots,L_n$ be $n+1$ general linear forms. Note that they form a basis for $[R]_1$. Choose any two, $L_i$ and $L_j$ and consider the following commutative diagram:
\[
\begin{array}{cccccccccccccccc}
&&  && 0 \\
&&  && \downarrow \\
&&  && \phantom{d-} \left [ \frac{I:L_j}{I} \right ]_{d-1} && 0 \\
&&  && \downarrow && \downarrow \\
0 & \rightarrow &  \phantom{d-} \left [ \frac{I:L_i}{I} \right ]_{d-1} & \rightarrow & \phantom{d-} [R/I]_{d-1} & \stackrel{\times L_i}{\longrightarrow} & [R/I]_d & \rightarrow & [R/(I,L_i)]_d & \rightarrow & 0 \\
&& && \phantom{\scriptstyle \times L_j} \downarrow {\scriptstyle \times L_j} &&
 \phantom{\scriptstyle \times L_j} \downarrow {\scriptstyle \times L_j}  \\
&& 0 & \rightarrow & [R/I]_{d} & \stackrel{\times L_i} {\longrightarrow} & \phantom{d-}  [R/I]_{d+1} & \rightarrow & [R/(I,L_i)]_{d+1} & \rightarrow & 0 \\
\end{array}
\]
An easy diagram chase shows that the kernel of multiplication by $L_i$ is the same as the kernel of multiplication by $L_j$. Since $L_0,\dots,L_n$ form a basis for $[R]_1$, this kernel is contained in the kernel of multiplication by any linear form, and we are done.
\end{proof}

%%%%%%%%%%%%%%%%%%%%%%%%%%%%%%%%%%%%%%%%%%%%%%%%%%%%%%%%%%%%

\section{Classification of Gorenstein sequences of the form $(1,r,r-2,r,1)$}  \label{GorSection}

As indicated in the introduction, a great deal of research has gone into the study of possible Gorenstein Hilbert functions (i.e. Gorenstein sequences). As a subproblem, it has been of great interest to understand when they can be unimodal. In the recent paper \cite{MZ:2} this  was solved for socle degrees 4 and 5. However, this fell short of a classification of the possible Hilbert functions even in socle degree 4 -- what is missing is to completely understand the extent of non-unimodality that occurs. What is now known thanks to that paper is a classification of the possible Gorenstein Hilbert functions of the form $(1,r, r-1,r,1)$. However, even the case $(1,r,r-2,r,1)$ is open. In this section we complete this case, as well as an analogous one for socle degree 5 (see Corollary~\ref{r-2}). 

In \cite{{MZ:2}} it was observed in Remark 3.5 that $(1,20,18,20,1)$ is a Gorenstein Hilbert function, arising easily using trivial extensions. It will then follow from Theorem \ref{wellknown} that this is the smallest possible of the form $(1,r,r-2,r,1)$ (and hence all values $r \geq 20$ also exist) once we show that the $h$-vector $\H=(1,19,17,19,1)$ is not a Gorenstein sequence. We do this using results from the previous section. We will use without comment Notation \ref{decomp notation}. The characteristic assumption is only to be able to use Theorem~\ref{T:20160707-213}.

\begin{thm} \label{T:20160707-301}
Assume that $\hbox{char } \k = 0$. Then the $h$-vector $\H=(1, 19,17,19,1)$ is not Gorenstein.
\end{thm}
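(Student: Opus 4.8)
The plan is to argue by contradiction: suppose $A = R/I$ is a Gorenstein algebra with $h$-vector $\H = (1,19,17,19,1)$, and derive a contradiction from the decomposition of the Hilbert function together with the machinery of Section~\ref{mac-green}. First I would set up the decomposition from Notation~\ref{decomp notation}. Since $A$ is Gorenstein of socle degree $e = 4$, Lemma~\ref{L:202} forces $b_3 = 1$, so the decomposition reads
\[
\begin{array}{ccccccc}
1 & 19 & 17 & 19 & 1 \\
  & b_0 & b_1 & b_2 & b_3 = 1 \\ \hline
\ell_0 & \ell_1 & \ell_2 & \ell_3 & \ell_4
\end{array}
\]
with $h_i = b_{i-1} + \ell_i$. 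Because $I$ contains no linear forms (as $h_1 = 19 = \dim[R]_1$ would force $n = 18$ and $[I]_1 = 0$; more precisely $R = \k[x_0,\dots,x_{18}]$), we get $\ell_1 = 0$, hence $b_0 = 19$; then $\ell_2 = 17 - b_1$ and $\ell_3 = 19 - b_2$ and $\ell_4 = 1 - b_3 = 0$. Next I would exploit Green's theorem (Theorem~\ref{T:201}(b)) and Macaulay's theorem to pin down the $b_i$ and $\ell_i$: the key is that $\ell_i \leq (h_i)_{(i)}|^{-1}_0$ while $\ell_4 = 0$ already gives $b_3 = 1$, and the constraints on $b_2$ coming from $R/(I:L)$ being Gorenstein of socle degree $3$ (with $h$-vector $(1, b_1, b_2, 1)$) force the symmetry $b_1 = b_2$. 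The narrow band of possibilities for the pair $(b_1, b_2) = (b, b)$ is then governed by the growth conditions; in particular $\ell_2 = 17 - b$ and $\ell_3 = 19 - b$, and Macaulay/Green sharpness will be forced in a specific degree.

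The heart of the argument, I expect, is to locate a degree where multiplication by a general linear form $L$ on $R/I$ is injective and where Green's Theorem~1 is sharp, so that Theorem~\ref{relate} applies and forces maximal growth for $R/(J:L)$ (equivalently, via Gotzmann, for $R/J$ where $J = \langle [I]_{\leq d}\rangle$), and then Corollary~\ref{cor of thm} or Theorem~\ref{T:20160707-213} forces $[I]_d$ to be (close to) the ideal of a hypersurface in a linear space. The natural degree to examine is $d = 2$: here $h_2 = 17$, and the candidate binomial expansion of $17$ together with the value of $\ell_2$ will either violate Green's theorem outright or force extremal behavior. I would compute the $2$-binomial expansion $17 = \binom{6}{2} + \binom{2}{1}$, so $(17)_{(2)}|^{-1}_0 = \binom{5}{2} + \binom{1}{1} = 11$; thus $\ell_2 \leq 11$, giving $b_1 \geq 6$, and symmetrically $b_2 \geq 6$. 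On the other side, I would use Lemma~\ref{socle}: if $\times L : [A]_2 \to [A]_3$ is injective (i.e. $h_2 - h_3 + \ell_3 = 17 - 19 + \ell_3 = 0$, so $\ell_3 = 2$, i.e. $b_2 = 17$) then combined with a kernel computation in degree $1 \to 2$ we would manufacture a socle element of $R/I$ in degree $1$, contradicting that $A$ is Gorenstein with socle concentrated in degree $4$. More generally, ruling out each admissible value of $b = b_1 = b_2$ in turn — using the symmetry of $R/(I:L)$, Green's theorem on $R/(I,L)$, the ``sequentially sharp'' Corollary~\ref{C:20160706-215}, and the socle-forcing Lemma~\ref{socle} — is the mechanism.

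The main obstacle, I anticipate, is the bookkeeping: there is a short but nonempty range of numerically admissible decompositions (values of $b_1 = b_2$ between $6$ and $17$), and each must be eliminated by a slightly different combination of the tools. The cleanest-looking case to dispatch is the one where Green's theorem is sharp for $R/I$ in some degree $d \in \{2, 3\}$ with $e \geq 2$ in the relevant binomial expansion and with the injectivity of $\times L$ available: there Theorem~\ref{relate} and Corollary~\ref{cor of thm}(iv) (or Theorem~\ref{T:20160707-213}, which is why char $\k = 0$ is assumed) force $[I]_d$ to cut out a hypersurface of small degree in a linear subspace $\Lambda \subset \mathbb{P}^{18}$, and then the Gorenstein duality/symmetry of $\H$ — via the ideal $(I:L)$ being Gorenstein with $h$-vector $(1, b, b, 1)$ and Theorem~\ref{wellknown}-type constraints — becomes incompatible with the resulting geometry (the linear span would be too small to support $h_1 = 19$, or the Hilbert polynomial dictated by Corollary~\ref{cor of thm}(iii) would be wrong). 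For the remaining cases where injectivity of $\times L$ fails in a given degree, I would feed the resulting nonzero kernel into Lemma~\ref{socle} to produce a socle element of $R/I$ in a degree strictly less than $4$, directly contradicting the Gorenstein hypothesis. Assembling these case eliminations — and checking that they are exhaustive over the admissible decompositions — is the real work; the individual computations are routine binomial-expansion manipulations of the sort carried out in the proof of Theorem~\ref{relate}.
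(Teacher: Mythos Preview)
Your overall framing is right (argue by contradiction, use the decomposition of $\H$ and the Gorenstein property of $R/(I:L)$ to reduce to a finite list of $(1,b,b,1)$, then kill each case with the tools of Section~\ref{mac-green}), but the proposal contains concrete errors and a real gap.

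First, two computational slips. You wrote $\ell_1 = 0$ and $b_0 = 19$; in fact $b_0 = \dim[R/(I:L)]_0 = 1$ always, so $\ell_1 = 18$. More importantly, your claimed range $6 \le b \le 17$ is much too wide. Green's theorem in degree $3$ gives $\ell_3 \le (19)_{(3)}|^{-1}_{\phantom{-}0} = 9$, hence $b \ge 10$; Macaulay's theorem on the bottom row $\ell_2 \mapsto \ell_3$ forces $b \le 12$. So there are exactly three cases, $b \in \{10,11,12\}$, and the relevant degree throughout is $d = 3$, not $d = 2$. (Multiplication $\times L: [A]_2 \to [A]_3$ is \emph{never} injective here since $\ell_3 \ge 7 > 2$, so your proposed use of Lemma~\ref{socle} at degree $2$ does not get off the ground.)

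Second, and this is the genuine gap: the case $b = 11$ is not handled by ``routine binomial-expansion manipulations.'' The paper's proof first introduces $J = \langle I_{\le 3}\rangle$ and, before splitting into the three cases, establishes $\H(R/J,4) \le 28$ by computing Betti tables of $R/J^{\rm lex}$ and invoking the Cancellation Principle to force socle in degree $2$ when $\H(R/J,4) \in \{29,30,31\}$. Then $b = 10$ falls quickly to Theorem~\ref{T:20160707-213} (this is where $\mathrm{char}\,\k = 0$ is used) and $b = 12$ to Lemma~\ref{socle}. But $b = 11$ requires restricting by a \emph{second} general linear form $L_2$, splitting further on $\H(R/(J,L_1,L_2),2) \in \{2,3\}$, and then carrying out genuine geometric analysis: identifying the schemes defined by $(J,L_1)$ and $J^{\rm sat}$ (unions of quadric surfaces, lines, planes, complete intersections of cubics in $\mathbb P^3$), invoking external structure theorems on hyperplane sections of curves, and comparing Hilbert polynomials. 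None of Theorem~\ref{relate}, Corollary~\ref{C:20160706-215}, or Lemma~\ref{socle} alone dispatches this case; the argument is substantially more delicate than your outline suggests.
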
 

\begin{proof} 
Assume that there exists an Artinian Gorenstein algebra $R/I$ with Hilbert function $\H$. 
Let $J= \langle I_{\leq 3} \rangle$ be the ideal generated by the components of $I$ in degrees $\leq 3$. Then, by Macaulay's theorem, 
$$
\H(R/J,4)\leq 31. 
$$

\begin{itemize}

\item[(i)] If $\H(R/J,4)= 31$ then the Hilbert function of $R/J$ has maximal growth in degree $3$. So by Lemma~\ref{L:20160525-205}, $R/J$ has a $7$-dimensional socle elements in degree $2$, and hence so does $R/I$. This contradicts the Gorenstein assumption.

\medskip

\item[(ii)] If $\H(R/J,4)= 30$, then the Betti table of $R/J^{\rm lex}$ (after truncating in degree 4) is of the form 
$$
\begin{array}{c|ccccccccccccccccccccccccccccc}
                                     & 0 & 1 & \cdots & \cdots & 18 & 19 & \\ 
                             \hline                                
                                0 & 1& 0 &  \cdots & \cdots & 0&  0 &\\ 
                                1 & 0 &173  &  \cdots & \cdots &247 & 13 & &\\
                                2 & 0 & 19 &  \cdots & \cdots & 131&  \text{\bfseries 7} &\\ 
                                3 & 0 &1  &  \cdots & \cdots &\text{\bf 0} &  0 & &\\
                                4 & 0 &43  &  \cdots & \cdots &551 & 30 & &
\end{array}
$$ 
Using the Cancellation Principle (see \cite{P:1}), we get that $R/J$ has a socle element in degree $2$, and hence so does $R/I$, which is a contradiction.  

\medskip

\item[(iii)] If $\H(R/J,4)= 29$, then the Betti table of $R/J^{\rm lex}$ (after truncating in degree 4) is of the form 
$$
\begin{array}{c|ccccccccccccccccccccccccccccc}
                                     & 0 & 1 & \cdots & \cdots & 18 & 19 & \\ 
                             \hline                                
                                0 & 1& 0 &  \cdots & \cdots & 0&  0 &\\ 
                                1 & 0 &173  &  \cdots & \cdots &247 & 13 & &\\
                                2 & 0 & 19 &  \cdots & \cdots & 131&  \text{\bfseries 7} &\\ 
                                3 & 0 &2  &  \cdots & \cdots &\text{\bf 1} &  0 & &\\
                                4 & 0 &41  &  \cdots & \cdots &532 & 29 & &
\end{array}
$$ 
Again using the Canclellation Principle, we get that $R/J$ has a socle element in degree $2$, and hence so does $R/I$, which is a contradiction.  
\end{itemize}

\smallskip

As a result, we have:

\begin{center}

{\em Without loss of generality we can assume that $\H(R/J,4)\le 28$. }

\end{center}

Since we must have
\[
\ell_3 \leq (\ell_2)_{(2)}|^{+1}_{+1} \hbox{ and } \ell_3 \leq (h_3)_{(3)}|^{-1}_{\phantom{-}0}
\]
as well has having the middle line symmetric (Gorenstein), 
there are three possibilities for the decomposition of $\H$. They are
$$
\begin{array}{ccccccc}
\begin{array}{ccccccccccccc}
1 & 19 & 17 & 19 & 1 \\
   & 1   & 10 & 10 & 1 \\ \hline 
1 & 18 &   7 &   9 &    
\end{array}
&&&
\begin{array}{ccccccccccccc}
1 & 19 & 17 & 19 & 1 \\
   & 1   & 11 & 11 & 1 \\ \hline 
1 & 18 &  6 &   8 &    
\end{array}
&&&
\begin{array}{ccccccccccccc}
1 & 19 & 17 & 19 & 1 \\
   & 1   & 12 & 12 & 1 \\ \hline 
1 & 18 &   5 &   7 &    
\end{array}
\end{array} 
$$

\medskip

\ni
{\be Case 1.} We consider the first decomposition of $\H$, namely 
\begin{equation} \label{EQ:20160601-308}
\begin{array}{ccccccccccccc}
1 & 19 & 17 & 19 & 1 \\
   & 1   & 10 & 10 & 1 \\ \hline 
1 & 18 &   7 &   9 &    
%\end{array}
%&&\Rightarrow&&
%
%\begin{array}{ccccccccccccc}
%1 & 18 & 16 & 18 & 26-a \\
%   & 1 & 10 & 10 & 17+a \\ \hline 
%1 & 17 & 5 & 7 &  9  
%\end{array}
\end{array} 
\end{equation}
%
%Note that by Green's theorem,
%$$
%\ell_3\le 9.
%$$
%
Since 
$$
h_{(3)}=19_{(3)}=\binom{2+3}{3}+\binom{2+2}{2}+\binom{2+1}{1} \quad \text{and} \quad 
\ell_3=9=19_{(3)}\big|^{-1}_{\phantom{-}0},
$$
by Theorem~\ref{T:20160707-213}, there is a $3$-dimensional linear space $\Lambda$ such that $J_3$ defines a hypersurface $F$ of degree $3$ in $\Lambda\subset \P^{18}$ (and is saturated in degree 3). Since $J$ is generated in degrees $\le 3$, the Hilbert function of $R/J$ is
$$ 
\H(R/J, t)=\binom{2+t}{t}+\binom{2+(t-1)}{(t-1)}+\binom{2+(t-2)}{(t-2)}, \quad \text{ for all } t\geq 3,
$$
and so it has maximal growth in degree $3$. Then, we have 
$$
31=\H(R/J,4)\le 28,
$$
which is a contradiction.

\medskip

\noindent {\be Case 2.}

Assume that we have the decomposition
\[
\begin{array}{ccccccccccccc}
1 & 19 & 17 & 19 & 1 \\
   & 1   & 11 & 11 & 1 \\ \hline 
1 & 18 &   6 &   8 &    
\end{array} 
\]
We have seen that ${\bf H}(R/J,4) \leq 28$. We will consider one further restriction. 
Let $L_1$ and $L_2$ be general linear forms. 
By Green's theorem (see Theorem~\ref{T:201}),
$$
\H(R/(J,L_1,L_2),2)\le ((\ell_2)_{(2)})\big|^{-1}_{\phantom{-}0} =(6_{(2)})\big|^{-1}_{\phantom{-}0} =3.
$$
Consider the  exact sequence
\begin{equation}\label{EQ:20160603-303}
0 \to ((J, L_1) : L_2)/(J, L_1)(-1) \to [R/((J, L_1) ] (-1) \stackrel{\times L_2}{\longrightarrow} R/(J, L_1) \to R/(J,L_1,L_2)\to 0.
\end{equation}
Then we have 
\begin{equation} \label{EQ:20160602-310} 
2=8-6=\ell_3-\ell_2\le \H(R/(J,L_1,L_2),3) \le ((\ell_3)_{(3)})\big|^{-1}_{\phantom{-}0} = (8_{(3)})\big|^{-1}_{\phantom{-}0}=2.
\end{equation} 
So
\begin{equation}\label{EQ:20160801-305}
\H(R/(J,L_1,L_2),3)=2\quad \text{ and }\quad ((J,L_1):L_2)_2=(J,L_1)_2,
\end{equation}
and so by Macaulay's theorem,
$$
\H(R/(J,L_1,L_2),2)=2 \text{ or } 3.
$$
We consider these two cases separately.

\medskip

\begin{itemize}

\item[(a)] Assume $\H(R/(J,L_1,L_2),2)=2$. 
We have the following decomposition for $R/(J,L_1)$:

\begin{equation} 
\begin{array}{ccccccccccccc}
1 & 18 & 6 & 8 &  \\
   & 1   & 4 & 6 &  \\ \hline 
1 & 17 &   2 &   2 &    
\end{array} 
\end{equation}
Since $\H(R/(J,L_1,L_2,3)=2$, by Gotzmann's persistence theorem, $(J,L_1,L_2)$ is $2$-regular. In particular, $[(J,L_1,L_2)]_2$ is the saturated ideal of a zero-dimensional scheme of degree 2, and the same is true if we replace $L_2$ by another general linear form, $L$.   From the commutative diagram
\[
\begin{array}{ccccccccccccc}
&&0  &&&& 0 \\
&& \downarrow &&&& \downarrow \\
0 & \rightarrow & [R/(J,L_1)]_2 & \stackrel{\times L}{\longrightarrow}  & [R/(J,L_1)]_3 & \rightarrow & [R/(J,L_1,L)]_3 & \rightarrow & 0 \\
&& \phantom{\scriptstyle \times L_2} \downarrow {\scriptstyle \times L_2} && \phantom{\scriptstyle \times L_2} \downarrow {\scriptstyle \times L_2}  && \phantom{\scriptstyle \times L_2} \downarrow {\scriptstyle \times L_2}  \\
0 & \rightarrow & [R/(J,L_1)]_3 & \stackrel{\times L}{\longrightarrow} & [R/(J,L_1)]_4 & \rightarrow & [R/(J,L_1,L)]_4 & \rightarrow & 0 \\
\end{array}
\]
we see that 
\[
\times L_2 : [R/(J,L_1)]_3 \rightarrow [R/(J,L_1)]_4
\]
is also injective. Hence we have the decomposition 
\begin{equation} 
\begin{array}{ccccccccccccc}
1 & 18 & 6 & 8 & 10 \\
   & 1   & 4 & 6 & 8  \\ \hline 
1 & 17 &   2 &   2 &  2
\end{array} 
\end{equation}
Then, the decomposition of the Hilbert function of $R/J$ is of the form
\begin{equation} \label{EQ:20160602-311}
\begin{array}{ccccccccccccc}
1 & 19 & 17 & 19 & 28-\alpha \\
   & 1   & 11 & 11 & 18+\alpha \\ \hline 
1 & 18 &   6 &   8 & 10   
\end{array} 
\end{equation}
Moreover, since
$$
28_{(4)}|^{-1}_{\phantom{-}0}=10 \quad \text{and} \quad 27_{(4)}|^{-1}_{\phantom{-}0} =9,
$$
$\alpha$ has to be $0$. 

Notice that the growth of the Hilbert function of $R/(J,L_1)$ from degree 3 to degree 4 is maximal, so by Gotzmann's Persistence Theorem the ideal is saturated in all degrees $\geq 3$ and the Hilbert polynomial is $2t+2$. 
In particular, in all degrees $\geq 3$, $(J,L_1)$ defines either the union of a plane curve of degree 2 and a point (embedded or not) or two skew lines in $\mathbb P^3$. This means that $J^{\rm sat}$ defines the union of  a scheme $\X$ and (possibly embedded) a finite set of $m$ points (for some $m \geq 0$), where $\X$ is either the non-degenerate union of two planes in $\mathbb P^4$ or the union in $\mathbb P^3$ of a quadric surface and a line. Notice that in the first case ${\bf H}(R/I_\X,4) = 29+m$ and in the second case ${\bf H}(R/I_X,4) = 28+m$.

Since
\[
{\bf H}(R/I_\X,4) \leq {\bf H}(R/J^{\rm sat},4) \leq {\bf H}(R/J,4) = 28,
\]
we see that $\X$ is not the union of two planes in $\mathbb P^4$, and furthermore we have $m=0$. Since $28 = \binom{6}{4} + \binom{5}{3} + \binom{3}{2}$, $J$ is already saturated in degree 4 and the Hilbert function has maximal growth from this point on. Then by Lemma \ref{L:20160525-205}, $R/J$ has a $1$-dimensional socle  in degree $3$, hence so does $R/I$, which is a contradiction. 

\medskip

\item[(b)] Assume that $\H(R/(J,L_1,L_2),2)=3$. Then
$$
\begin{array}{rlclll}
\H((R/(J,L_1),2)&=&\ell_2&=&\binom{4}{2}, \text{ and }\\[1ex]
\H((R/(J,L_1,L_2),2)&=&(\ell_2)_{(2)}|^{-1}_{\phantom{-}0} &=&\binom{3}{2}.
\end{array}
$$
%By Theorem~\ref{T:20160530-302}, there is a two-dimensional linear space $\Lambda$ such that 
%$$(J,L_1)/(L_1)_2=(I_{\Lambda, \mathbb L_1})_2,$$ where $\mathbb L_1\subset \mathbb \P^{18}$ is a hyperplane defined by a linear form $L_1$. 
By Corollary~\ref{Green 3}, there is a two-dimensional linear space $\Lambda\subset \P^{18}$ such that 
$$(J,L_1)_2=(I_{\Lambda})_2.$$  
We have the decomposition of the Hilbert function of $R/(J,L_1)$ as follows:

\[
\begin{array}{ccccccccccccc}
1 & 18 & 6 & 8 &   \\
   & 1   & 3 & 6 &   \\ \hline 
1 & 17 &   3 &  2   
\end{array} 
\]
Since $\dim_\k [R/I_\Lambda]_3 = 10$, there are two cubic polynomials, $F_1$ and $F_2$, such that $(J,L_1)_3 = [I_\Lambda]_3 + \langle F_1,F_2 \rangle_3$. Letting $\bar F_1$ and $\bar F_2$ be the restrictions to $R/I_\Lambda$, we have the following possibilities (recalling that $J$ is generated in degree $\leq 3$):

\medskip

\begin{itemize}
 \item[(i)] If $\bar F_1$ and $\bar F_2$ are a complete intersection, then 
 the Hilbert function of $R/(J,L_1)$ is
$$
\begin{array}{lllllllllllll}
\H_{R/(J,L_1)}: & 1 & 3 & 6 & 8 & 9 & 9 & \cdots.
\end{array}
$$

 \item[(ii)] If $\bar F_1$ and $\bar F_2$ have a linear common factor, then 
$$
\begin{array}{lllllllllllll}
\H_{R/(J,L_1)}: & 1 & 3 & 6 & 8 & 9 & 10 & \cdots.
\end{array}
$$
 \item[(iii)] If $\bar F_1$ and $\bar F_2$ have a quadratic common factor, then 
$$
\begin{array}{lllllllllllll}
\H_{R/(J,L_1)}: & 1 & 3 & 6 & 8 & 10 & 12 & \cdots.
\end{array}
$$
\end{itemize}

\medskip

\ni In all of these cases we have that for any $d \geq 3$,
$$
[(J,L_1)]^{\sat}_d =(J,L_1)_d \subseteq ((J:L_2),L_1)_d\subseteq (J^{\sat},L_1)_d\subseteq [(J,L_1)]^{\sat}_d.
$$
Hence, 
$$
\begin{array}{llllll}
%\H(R/((J:L_2),L_1),2)&=&\H(R/(J,L_1),2)=6, \quad \text{and} \quad \\
\H(R/((J:L_2),L_1),3)&=&\H(R/(J,L_1),3)=8 . \\
\end{array}
$$
Remembering that both $L_1$ and $L_2$ are general linear forms and that
\[
{\bf H}(R/(J:L_1),3) \leq h_3 = 19,
\] 
this means that ${\bf H}(R/(J:L_1),3)$ is either 19 or 18.

Now recall that we assume that  $\H(R/J,4)\leq 28$ and
$$
(27)_{(4)}|^{-1}_{\phantom{-}0} =9 \quad \text{and} \quad (28)_{(4)}|^{-1}_{\phantom{-}0} =10.
$$
It follows that there are three possible decompositions of the Hilbert function $\H$, namely 
\begin{equation*}
\begin{array}{ccccccccccccc}
1 & 19 & 17 & 19 & 28  \\
   & 1   & 11 & 11 & 19 \\ \hline 
1 & 18 &   6 &   8 & 9   
\end{array} 
\quad \text{ or } \quad
\begin{array}{ccccccccccccc}
1 & 19 & 17 & 19 & 28  \\
   & 1   & 11 & 11 & 18  \\ \hline 
1 & 18 &   6 &   8 & 10   
\end{array} 
\quad \text{ or } \quad
\begin{array}{ccccccccccccc}
1 & 19 & 17 & 19 & 27  \\
   & 1   & 11 & 11 & 18  \\ \hline 
1 & 18 &   6 &   8 & 9   
\end{array} 
\end{equation*}

\ni The first is eliminated using Lemma \ref{socle}.  The second was already eliminated in part (a) of the proof. 
We thus focus on the third possibility, and we include a consideration of what happens in degree 4. We are either in case (i) or case (ii) above. 

First consider case (i). We know that $J^{\rm sat}$ defines the union of a set of $m \geq 0$ points and a curve, $C$ in $\mathbb P^3$, of degree 9 whose general hyperplane section is the complete intersection of two cubics in the plane. By \cite{strano} or \cite{migliore}, $C$ must be the complete intersection of two cubic surfaces in $\mathbb P^3$. Thus the Hilbert polynomial of $R/J^{\rm sat}$ is $9t-9+m$, so by looking in degree 4 we see $m=0$ and $[J]_4 = [J^{\rm sat}]_4$. Then for a general linear form $\times L : [R/J]_4 \rightarrow [R/J]_5$ is injective, but the same is not true from degree 3 to degree 4, so by Lemma \ref{socle} $R/J$ has socle in degree 3. Then the same is true of $R/I$, and we are done.

Now consider case (ii). The Hilbert polynomial of $R/(J,L_1)$ is $t+5$, so $(J,L_1)$ is the saturated ideal of a line and a complete intersection set of four points in the plane (since it is a quotient of $R/I_\Lambda$), where the complete intersection contains at most a subscheme of degree 2 embedded in the line. 

Now consider $J^{\rm sat}$. In degree 2 it defines the union of a 3-dimensional linear space $\Pi$ and  a set of $m$ points, for some $m \geq 0$. In degree 3 it defines the scheme-theoretic union of  $\leq m$ points and (in $\Pi$) a plane and a curve $C$ of degree 4. Notice that modulo $I_\Lambda$ the ideal $(J,L_1)$ has the form $(LG_1, L G_2)$ where $G_1$ and $G_2$ are a complete intersection (hence independent), so $C$ must be defined by two quadrics, i.e. it must be a complete intersection. So $J^{\rm sat}$ defines a scheme that contains a subscheme (viewed in $\mathbb P^3$) defined by an ideal of the form $(LQ_1, LQ_2)$. Such a subscheme $\X$ already has Hilbert function satisfying ${\bf H}(\X, 4) = 35 - 8 = 27$. Thus $J^{\rm sat}$ is saturated in degree 4, and the same argument that we used for (i) works here.

\end{itemize}

\medskip

\ni
{\be Case 3.} Now consider the last decomposition of $\H$, namely
\begin{equation} \label{EQ:20160602-314}
\begin{array}{ccccccccccccc}
1 & 19 & 17 & 19 & 1  \\
   & 1   & 12 & 12 & 1   \\ \hline 
1 & 18 &   5 &   7 &     
\end{array} 
\end{equation}
Note that, by Gotzmann persistence theorem, the ideal $[(J,L)]$ is $2$-regular and $[(J,L)/(L)]$ defines a conic in a two dimensional linear space in $\L \cong\P^{17}$. This implies that $J^{\sat}$ defines the union of a quadric hypersurface $\mathbb F$ in a $3$-dimensional linear space $\Lambda\subset \P^{18}$ and a finite scheme $\Y$ in $\P^{18}$. Hence we have the following decomposition.
\begin{equation} \label{EQ:20160602-315}
\begin{array}{ccccccccccccc}
1 & 19 & 17 & 19 & 28-\alpha  \\
   & 1   & 12 & 12 & 19+\alpha   \\ \hline 
1 & 18 &   5 &   7 &   9  
\end{array} 
\end{equation}

Since 
$$
\H(R/(J:L_1),3)\le \H(R/J,3)=19,
$$
one can see that $\alpha=0$. Hence we can rewrite equation~\eqref{EQ:20160602-315} as
\begin{equation} \label{EQ:20160602-316}
\begin{array}{ccccccccccccc}
1 & 19 & 17 & 19 & 28 \\
   & 1   & 12 & 12 & 19 \\ \hline 
1 & 18 &   5 &   7 &   9  
\end{array} 
\end{equation}

\ni and by Lemma \ref{socle}, $R/J$ has a $5$-dimensional socle  in degree $2$, hence $R/I$ does as well, which is a contradiction. 
This completes the proof. 
\end{proof} 

As announced at the beginning of this section, we have the following consequence.

\begin{cor} \label{r-2}
A Gorenstein sequence of the form  $(1,r,r-2,r,1)$ exists if and only if $r \geq 20$.
\end{cor}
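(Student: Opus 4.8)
The plan is to deduce Corollary~\ref{r-2} by combining Theorem~\ref{T:20160707-301} with the monotonicity statement in Theorem~\ref{wellknown} and the known example $(1,20,18,20,1)$ cited from \cite{MZ:2}, Remark 3.5. So there are two directions to establish: that $r \geq 20$ is necessary, and that every $r \geq 20$ actually occurs.

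For the ``if'' direction, I would start from the fact, recalled in the section, that $(1,20,18,20,1)$ is a Gorenstein sequence (it arises via trivial extensions). Then Theorem~\ref{wellknown} says that if $(1,n,a,n,1)$ is a Gorenstein $h$-vector, so is $(1,n+1,a+1,n+1,1)$. Applying this repeatedly starting from $(1,20,18,20,1) = (1,20,20-2,20,1)$ gives $(1,21,19,21,1) = (1,21,21-2,21,1)$, then $(1,22,20,22,1)$, and in general $(1,r,r-2,r,1)$ for every $r \geq 20$ by an immediate induction on $r$.

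For the ``only if'' direction I need to rule out $(1,r,r-2,r,1)$ for $r \leq 19$. First note that for such a sequence to be a valid $O$-sequence at all we need the first difference to be admissible; in particular small values of $r$ fail trivially (e.g. $r \le 2$ makes $r-2 \le 0$, and for very small $r$ one checks directly). The substantive content is the range near $r = 19$. The key point is the contrapositive form of Theorem~\ref{wellknown}: if $(1,r,r-2,r,1)$ were Gorenstein for some $r \leq 19$, then applying Theorem~\ref{wellknown} (increasing $n$ and $a$ simultaneously by $1$ a total of $19-r$ times) would force $(1,19,17,19,1)$ to be a Gorenstein sequence as well. But that contradicts Theorem~\ref{T:20160707-301}. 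Hence no $r \leq 19$ works, and combined with the previous paragraph we conclude that $(1,r,r-2,r,1)$ is Gorenstein precisely when $r \geq 20$.

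The only real obstacle here has already been surmounted, namely Theorem~\ref{T:20160707-301} ruling out $(1,19,17,19,1)$; the corollary itself is then a short bookkeeping argument using the ``going up'' mechanism of Theorem~\ref{wellknown}. One should be a little careful that Theorem~\ref{wellknown} is being used in the correct direction --- it propagates Gorenstein-ness upward in $r$, so its failure at $r=19$ propagates the non-existence downward to all $r \le 19$ --- and that the base example $(1,20,18,20,1)$ is correctly of the form $(1,r,r-2,r,1)$ with $r = 20$. There is nothing deeper to check.
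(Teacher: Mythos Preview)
Your proof is correct and follows essentially the same approach as the paper: the paper also derives the corollary from the example $(1,20,18,20,1)$ in \cite{MZ:2}, the propagation in Theorem~\ref{wellknown}, and Theorem~\ref{T:20160707-301}, exactly as you do. Your remark about very small $r$ is harmless but unnecessary, since the contrapositive of Theorem~\ref{wellknown} already rules out all $r\le 19$ uniformly.
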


%%%%%%%%%%%%%%%%%%%%%%%%%%%%%%%%%%%%%%%%%%%%%%%%%%%%%%%

\section{Acknowledgements}

The first author was supported by the Basic Science Research Program through the National Research Foundation of Korea (NRF), funded by the Ministry of Education, Science, and Technology (No. 2011-0027163); the second author was supported by a grant from the Simons Foundation (grant \#309556); the third author was supported by a grant from Sungshin Women’s University. Part of this paper was written during the Research Station on Commutative Algebra, June 13 -- 18, 2016, which was supported by the Korea Institute of Advanced Study. We thank Mats Boij and Fabrizio Zanello   for helpful comments about the statements in this paper.

\end{document}